\documentclass[11pt,english]{smfart}
\usepackage[T1]{fontenc}
\usepackage[english]{babel}
\usepackage{amssymb}
\usepackage{amsmath}
\numberwithin{equation}{section}
\theoremstyle{plain}
\theoremstyle{definition}
\newtheorem{definition}{Definition}[section]
\newtheorem{theorem}[definition]{Theorem}

\newtheorem{lemma}[definition]{Lemma}
\newtheorem{proposition}[definition]{Proposition}

\theoremstyle{remark}
\newtheorem*{remark}{Remark}
\theoremstyle{remark}

\theoremstyle{definition}

\begin{document}
\def\smfbyname{}

\title[Polygonal numbers]{Sums of four generalized polygonal numbers of almost prime length}
%\date {}
\author{Kwan to Ng}
\address{Mathematics Department, The University of Hong Kong, Pokfulam, Hong Kong}
\email{u3011714@connect.hku.hk}

%\address{Institut Henri Poincar\'e\\
%11 rue Pierre et Marie Curie, F-75231 Paris cedex 05}
%\email{christia@dma.ens.fr}
%\urladdr{http://smf.emath.fr/}
%\keywords{\LaTeXe, SMF, typesetting}

\begin{abstract}
In this paper, we consider sums of four generalized $m-$gonal numbers whose parameters are restricted to integers with a bounded number of prime divisors. With some restriction on $m$ modulo 30, we show that for $n$ sufficiently large, it can be represented as such a sum, where the parameters are restricted to have at most 988 prime factors.
\end{abstract}

\maketitle

%\tableofcontents
\section{Introduction}
For $m \in \mathbb{N}_{\geq3}$ and $x \in \mathbb{Z}$, set \medskip $$p_m(x):= \frac{(m-2)x^2}{2}-\frac{(m-4)x}{2}.$$ \\
If $x \in \mathbb{N},$ then the number $p_m(x)$ counts the number of dots in a regular $m$-gon of length $x.$ In general, for $x \in \mathbb{Z},$ we call $p_m(x)$ the generalized $m-$ gonal numbers.
\\For $n\in \mathbb{N}, \alpha_j\in \mathbb{N}, $ consider the equation
\begin{equation}\label{eqn:summgonal}
    \alpha_1p_m(x_1)+\alpha_2p_m(x_2)+\alpha_3p_m(x_3)+\alpha_4p_m(x_4)=n.
\end{equation}
It is obvious that if $(\alpha_1,\alpha_2,\alpha_3,\alpha_4)$ is too arbitrary, i.e.  $(\alpha_1,\alpha_2,\alpha_3,\alpha_4)=(p,p,p,p),$ then $n$ that can be represented is restricted. We hence restrict ourselves to the case where $\prod_{j=1}^4\alpha_j$ is odd and square-free.
To understand the above sum and the relationship with shifted lattice theory, we complete the square so that (\ref{eqn:summgonal}) becomes
$$\sum_{j=1}^{4}\alpha_{j}X_{j}^{2}=8(m-2)n+\sum_{j=1}^{4}\alpha_{j}(m-4)^{2},$$
        where $X_j=(2(m-2)x_{j}+4-m).$
        Taking $L=2(m-2)\mathbb{Z}^4$ and $v=(4-m,...,4-m) \in \mathbb{Z}^4,$ above is related to counting points of a given Euclidean distance on the shifted lattice $X=L+v.$
\begin{theorem}\label{thm:main}
    Suppose that $m$ is odd, $m-4\not\equiv 0 \pmod3$ and $m-4\not\equiv 0 \pmod5.$
    Further suppose $\text{ord}_p(\prod_{j=1}^4\alpha_j)\leq 1$ for any odd prime $p.$ Then for $n$ sufficiently large, the equation (\ref{eqn:summgonal}) is solvable with each $x_j$ containing at most $988$ prime factors.
\end{theorem}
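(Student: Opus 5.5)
This follows the Blomer–Brüdern–Dietmann approach to sums of four squares of almost primes. We apply a weighted sieve to the representation numbers of the shifted lattice problem. For squarefree odd $d=(d_1,\dots,d_4)$, put
\[
r_d(n)=\#\{x\in\mathbb{Z}^4:\ \textstyle\sum_j\alpha_j p_m(x_j)=n,\ d_j\mid x_j\}.
\]
After completing the square, $r_d(n)$ counts vectors $X$ in the shifted lattice $L_d+v$ with $Q(X)=\sum_j\alpha_jX_j^2=N:=8(m-2)n+(m-4)^2\sum_j\alpha_j$. Here $L_d=\bigoplus_j 2(m-2)d_j\mathbb{Z}$, and the condition $d_j\mid x_j$ becomes $X_j\equiv 4-m \pmod{2(m-2)d_j}$.

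The first step is an asymptotic formula. Via modular forms of weight $2$ (theta series of the shifted lattice) we write
\[
r_d(n)=r_{d}^{E}(n)+r_d^{C}(n),
\]
with an Eisenstein part and a cuspidal part. The Eisenstein part is a product of local densities, $r_d^E(n)=\frac{\omega(d)}{d_1d_2d_3d_4}\,\rho(n)\,N$. The function $\rho$ is a singular series, bounded below uniformly in $n$ by $N^{-\varepsilon}$; we expect this from the hypotheses that $\prod\alpha_j$ is odd and squarefree and $m$ is odd. The density $\omega(d)$ is multiplicative, with $\omega(p)$ computed from local solutions with $x_j\equiv0\pmod p$. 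The cuspidal part is bounded by Deligne or Kim–Sarnak type bounds on Fourier coefficients, together with explicit dependence on the level, which is about $(m-2)^2 d_1\cdots d_4\prod\alpha_j$:
\[
r_d^C(n)\ll N^{1/2+\theta+\varepsilon}(d_1d_2d_3d_4)^{A}.
\]
Here one needs the Petersson norm of the cuspidal projection of a theta function of level $\asymp d^2$. This gives a level of distribution $D=N^{\delta}$ with $\delta$ small and explicit.

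The second step checks the sieve dimension and the local conditions. For a prime $p\nmid 2(m-2)\prod\alpha_j$, requiring one coordinate to be $\equiv 0 \pmod p$ removes about a $1/p$ proportion, so each coordinate has sieve dimension $1$ and the total dimension is $\kappa=4$. We must make sure no small prime $p$ forces $p\mid x_j$ for all solutions, i.e. that $1-\omega(p)/p>0$ for every $p$. This is exactly where the hypotheses enter. We have $p\nmid(m-4)$ for $p=3,5$, so $X_j\not\equiv0\pmod p$ is possible when $x_j\not\equiv0$. Also $m$ odd handles $p=2$. For $p=3,5$ one checks directly that $Q(X)\equiv N\pmod p$ has solutions with all $x_j\not\equiv0\pmod p$ for each residue of $n$; the case $p=3$ with a single nonzero $\alpha_j$ modulo $3$ is the delicate one. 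For $p\ge7$, the count of local solutions of a nondegenerate quaternary form, even with at most one $\alpha_j$ divisible by $p$, leaves room.

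The third step applies a vector (multidimensional) weighted sieve of Diamond–Halberstam–Richert type to the sequence $\mathcal{A}=\{x_1x_2x_3x_4\}$. This sieve has dimension $4$, level $D=N^{\delta}$, and elements of size $\ll N^{2}$, since $|x_j|\ll N^{1/2}$. The sieve then produces $x$ with $x_1x_2x_3x_4$ having at most $R$ prime factors. Here $R$ is determined by $\delta$ and the $\kappa=4$ sieve constants, roughly $R\approx \frac{\log N^2}{\delta\log N}\cdot c(\kappa)$, so each $x_j$ has at most $R$ prime factors. Optimizing the exponents should give $988$.

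The main obstacle is the first step: obtaining a cusp-form bound that is uniform in the level $d_1\cdots d_4$ for a shifted lattice. One has to bound the Petersson norm of the cuspidal part of $\theta_{L_d+v}$ polynomially in $d$, and the size of the exponent $A$ directly controls $\delta$ and hence the final constant. I would attempt this via an explicit basis of Eisenstein series for $\Gamma_1(M)$, bounding the inner product of $\theta$ with itself through the Rankin–Selberg unfolding.
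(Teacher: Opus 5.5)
Your analytic skeleton matches the paper's. You use theta series of the cosets $X^{\mathbf d}=L^{\mathbf d}+v$, Shimura's product of local densities with $a_{E_{X^{\mathbf 1}}}(h)\gg h^{1-\epsilon}$, and a cusp bound polynomial in the level. The paper does not redo a Petersson-norm computation for that bound; it imports $|a_{G_X}(h)|\ll M^{11/2+\epsilon}N_{\boldsymbol\alpha}^{5/2+\epsilon}h^{1/2+\epsilon}$ from Kamaraj et al.

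The genuine gap is that the statement is quantitative and your plan never produces its number. You leave $A$, $\delta$ and the sieve constant unspecified and then say that optimizing ``should give $988$''. In the paper, $988$ is exactly $26\cdot 38$, obtained as follows.
\begin{enumerate}
\item The summed cuspidal error is $\ll (DD_0)^{26+\epsilon}h^{1/2+\epsilon}$. There are $(DD_0)^4$ vector moduli, and each level $N_{\boldsymbol\alpha,\mathbf d\boldsymbol\ell}\ll\prod_j d_j^2\ell_j^2\alpha_j$, raised to the power $11/4$, contributes $(DD_0)^{22}$.
\item Positivity of the Rosser lower bound with $\kappa=4$ needs $s=\log D/\log z\ge 38$. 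With $z=h^\theta$ this forces $988\theta<\frac12$, and the paper takes $\theta<1/1977$.
\item The final count is unweighted and per coordinate. Every prime factor of $x_j$ not dividing $2\prod\alpha_j$ exceeds $n^{1/1977}$, and $|x_j|\ll n^{1/2}$, so $x_j$ has at most $988$ such factors.
\end{enumerate}
Nothing in your outline fixes these inputs. As written it gives ``boundedly many prime factors'', not ``at most $988$''.

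Your sieve is also a genuinely different route. You sieve the single sequence $x_1x_2x_3x_4$ in dimension $4$, whereas the paper runs the Br\"udern--Fouvry vector sieve with $\Lambda^-=4\lambda^--3\lambda^+$ in two stages ($z_0$, then $z$). Your version has a real advantage. For squarefree $d$, inclusion--exclusion writes the number of solutions with $d\mid x_1x_2x_3x_4$ as a signed sum of at most $15^{\omega(d)}$ counts $r_{X^{\mathbf d}}(h)$. The Eisenstein parts then combine into one multiplicative density. This avoids the non-factorization of $\prod_p\beta_{X^{\mathbf d},p}(h)$ when $\gcd(d_j,d_k)>1$, which costs the paper the $g(\mathbf d)$ lemma and the $\Delta$-truncation. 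Your error becomes $\sum_{d\le D}15^{\omega(d)}d^{8A}h^{1/2+\epsilon}$ rather than $(DD_0)^{4+8A}h^{1/2+\epsilon}$.

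The resulting constant, however, is whatever your explicit $A$ and an explicit $\kappa=4$ (weighted) sieve theorem yield, not $988$. To finish, you must fix both, compute $R$, and verify $R\le 988$. Take the level to be $\asymp\prod_j d_j^2$, since the Gram matrix is $\mathrm{diag}(4(m-2)^2\alpha_jd_j^2)$, not $(m-2)^2d_1\cdots d_4\prod\alpha_j$ as you first state. Finally, $\text{ord}_3(\prod\alpha_j)\le 1$ forces at least three $\alpha_j$ to be units mod $3$. So your ``delicate case'' of a single nonzero $\alpha_j$ mod $3$ cannot occur.
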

\begin{remark}
    We further assume that $m-4\not\equiv 0 \pmod3$ and $m-4\not\equiv 0 \pmod5$ due to the following technical issue.
    Suppose we have $m-4\equiv 0 \pmod3$ or $m-4\equiv 0 \pmod5,$ then $X=2(m-2)x+4-m \not\equiv 0 \pmod p$ is equivalent to $x\not\equiv 0 \pmod p.$ Therefore the integers n that can be represented as (\ref{eqn:summgonal}) with $x_1,x_2,x_3,x_4\not\equiv 0 \pmod p$ is restricted: when $p = 3,$ only those $n$ with $n \equiv 0 \pmod p$, when $p = 5$ only those $n$
with $n \not\equiv 0 \pmod p.$
\end{remark}
The paper is organized as follows. In Section 2, we recall the theory of quadratic lattices and shifted lattices and the interrelation between modular forms and theta series. In Section 3, we compute local densities for the Eisenstein series component of the theta series and obtain lower bounds for the Fourier coefficients of the Eisenstein series component. In section 4, we bound the Fourier coefficients coming from the contribution of the cuspidal part of the theta function. In Section 5, we obtain bounds to set up the sieve required in the proof of Theorem \ref{thm:main}. We apply sieving techniques and prove Theorem \ref{thm:main} in Section 6.

\section{Preliminaries}
\Subsection{Setup and notation}
Throughout this paper, we suppose $m$ to be odd. Let $V$ be a positive definite quadratic space over $\mathbb{Q}$ of rank $k$ associated with an non-degenerate symmetric bilinear form 
\begin{equation*}
    B: V \times V\rightarrow \mathbb{Q} \text{ with } Q(x)=B(x,x) \text{ for any }
x\in V.
\end{equation*}
A quadratic $\mathbb{Z}$-lattice is a free $\mathbb{Z}$-module 
\begin{equation*}
    L=\mathbb{Z}e_1+\mathbb{Z}e_2+\cdot\cdot\cdot+\mathbb{Z}e_k
\end{equation*}
of rank $k,$ where $e_1,...,e_k$ is a basis of $V.$ The dual lattice of $L$ is defined by 
\begin{equation*}
    L^{\#}:=\{v\in V: B(v,x)\in \mathbb{Z} \text{ }\forall x\in L \}.
\end{equation*}
The $k\times k$ symmetric matrix $A_{L}:=(B(v_i,v_j))$ is called the Gram matrix of $L$ with respect to a basis $v_1,...,v_k,$ and we write $L\cong (B(v_i,v_j)).$ We generally choose the vectors $v_i$ to be the standard basis element $e_i.$ The discriminant $d_L$ is defined to be the determinant of $(B(e_i,e_j)).$ If the Gram matrix is diagonal, then we write $L\cong \langle Q(e_1),...,Q(e_k) \rangle.$ For a rational prime $p,$ we define the localization of $V$ and $L$ by 
\begin{equation*}
    V_p:=V\otimes_{\mathbb{Q}}\mathbb{Q}_p \text{ and } L_p:= L\otimes_{\mathbb{Z}}\mathbb{Z}_p.
\end{equation*}
\\A lattice coset is a coset $L+v\in V/L$ where $v\in V.$ The conductor of the coset $L+v$ is the smallest integer $a$ such that $av\in L.$
Now consider 
$$f_{\boldsymbol{\alpha}}({\textbf{x}}):=\sum_{j=1}^4\alpha_jp_m(x_j)$$
and write $$h=h(n):=8(m-2)n+\sum_{j=1}^4\alpha_j(m-4)^2.$$
Then $n$ is represented by $f$ iff $h$ is represented by $X^{\textbf{1}}=L^{\textbf{1}}+v$ where 
\begin{multline*}
L^{\textbf{1}}=\mathbb{Z}(e_1)+\mathbb{Z}(e_2)+\mathbb{Z}(e_3)+\mathbb{Z}(e_4) 
\\\cong \langle 4(m-2)^2\alpha_1, 4(m-2)^2\alpha_2, 4(m-2)^2\alpha_3, 4(m-2)^2\alpha_4\rangle
\end{multline*}
and $v=\frac{4-m}{2(m-2)}(e_1+e_2+e_3+e_4).$
More generally, for ${\textbf{d}}=(d_1,d_2,d_3,d_4)\in \mathbb{N}^4,$ define the coset $X^{\textbf{d}}:=L^{\textbf{d}}+v,$ where $L^{\textbf{d}}$ is the sublattice of $L^{\textbf{1}}$ defined by 
$$L^{\textbf{d}}=\mathbb{Z}(d_1e_1)+\mathbb{Z}(d_2e_2)+\mathbb{Z}(d_3e_3)+\mathbb{Z}(d_4e_4).$$
Then $n$ is represented by $f$ with the restriction $d_j\mid x_j$ iff $h$ is represented by $X^{\textbf{d}}.$ Therefore, we are interested in the number of solution to $$h=\sum_{j=1}^4(2(m-2)d_jx_j+4-m)^2.$$
\Subsection{Modular forms}
We will introduce modular form of half-integral weight in the following.
\\For positive integer N, we define 
\begin{equation*}
    \Gamma_0(N):=\{\begin{pmatrix}
        a & b \\
        c & d 
    \end{pmatrix}\in \Gamma_0(N): a,d\equiv 1 \pmod N\},
\end{equation*}
\begin{equation*}
    \Gamma_1(N):=\{\begin{pmatrix}
        a & b \\
        c & d 
    \end{pmatrix}\in \text{SL}_2(\mathbb{Z}): c\equiv 0 \pmod N\},
\end{equation*}
to be two natural congruence subgroups of $\text{SL}_2(\mathbb{Z}).$
\\For $\gamma= \begin{pmatrix}
        a & b \\
        c & d 
    \end{pmatrix}\in \Gamma_0(4)$ and $k\in \frac{1}{2}\mathbb{Z},$ we define the slash operator on a function $f:\mathbb{H}\rightarrow \mathbb{C}$ by 
    $$f\mid_k\gamma(z)=(\frac{c}{d})^{2k}\epsilon_d^{2k}(cz+d)^{-k}f(\gamma z),$$
    where $\epsilon_d= \begin{cases}
			1, & \text{if $d\equiv 1 \pmod4$}\\
            i & \text{if $d\equiv 3 \pmod4$}
		 \end{cases} ,$
$(\frac{\cdot}{\cdot})$ is the Kronecker symbol, and $\gamma$ acts on $\mathbb{H}$ by fractional linear transformations defined by $\gamma z:=\frac{az+b}{cz+d}.$ Then we call $f$ a (holomorphic) modular form of weight $k$ on $\Gamma\subseteq \Gamma_0(4)$ where $\begin{pmatrix}
        1 & 1 \\
        0 & 1 
    \end{pmatrix}\in \Gamma$ with character $\chi$ if 
    \begin{enumerate}
    \item $f\mid_k\gamma=\chi(d)f$ for any $\gamma= \begin{pmatrix}
        a & b \\
        c & d 
    \end{pmatrix}$ in $\Gamma,$
    \item $f$ is holomorphic on $\mathbb{H},$
    \item $f(z)$ grows at most polynomially in $y$ as $z=x+iy \rightarrow \mathbb{Q} \cup \{i\infty\}.$ 
    \end{enumerate}
    Then we call $f$ a cusp form if $f(z)\rightarrow 0$ as $z\rightarrow \mathbb{Q} \cup \{i\infty\}.$ Note that if $f$ is a modular form for a congruence group $\Gamma $ that contain $\begin{pmatrix}
        1 & 1 \\
        0 & 1 
    \end{pmatrix}$, then $f$ has a Fourier series expansion
    $$f(z)=\sum_{n=0}^{\infty}a(n)q^n,$$ where $q=e^{2\pi iz}.$ If $f$ is a cusp form, then $a(0)=0.$
\Subsection{Theta series for cosets}
Let $X=L+v$ be a coset in a quadratic space $V$ of rank $k.$. For a positive integer $n$, we define     
$$R_X(n):=\{x \in X : Q(x) = n\}, r_X(n):=\mid R_X(n)\mid.$$
Then the theta series $\Theta_X(z)$ of the coset $X$ is defined as
$$\Theta_X(z):=\sum_{x\in X}q^{Q(x)}=\sum_{n=0}^{\infty}r_X(n)q^n.$$
Using \cite[Proposition 2.1]{Banerjee}, a straightforward calculation 
shows that the theta series of cosets of rank $k$ are modular forms of weight $\frac{k}{2}.$ Therefore it splits into a sum of Eisenstein series $E_{X}$ and a cusp form $G_{X}.$ Let $a_{E_{X}}(n)$ and $a_{G_{X}}(n)$ denote the $n-$th Fourier coefficient of $E_{X}$ and $G_{X}$ for any integer $n\geq 0,$ respectively. We have 
$$r_{X}(n)=a_{E_{X}}(n)+a_{G_{X}}(n).$$

\section{Eisenstein series components}
Let $X=L+v$ be a lattice coset of rank $k,$ and let $h\in \mathbb{Q}.$ For $p$ prime and $z\in \mathbb{Q}_p,$ we define $\textbf{e}_p(z)=e^{2\pi i(-y)}$ with $y\in \cup_{t=1}^{\infty}p^{-t}\mathbb{Z}$ such that $z-y\in\mathbb{Z}_p.$ Let $\lambda(v)$ and $\lambda_p$ be the characteristic functions of $X$ and $X_p$ respectively. Normalize the measures $dv$ and $d\sigma$ on $L_p$ and $\mathbb{Z}_p$ so that $\int_{L_p} \,dv=\int_{\mathbb{Z}_p} \,dv=1.$
    Define the local density at p by
    \begin{equation}\label{eqn:localden}
    b_p(h,\lambda,0):=\int_{\mathbb{Q}_p}\int_{L_p}\textbf{e}_p(\sigma(Q(v+\nu)-h)\,dv\,d\sigma.
    \end{equation}
    Shimura \cite[Theorem 1.5]{Shimura} showed that the $h$-th Fourier coefficient of the Eisenstein series part of $\Theta_X$ can be expressed as the product of local densities. We refer the readers to \cite{Van der Blij} and \cite{Weil} for more details.
    \\We now provide the statement for the special case when $k = 4.$
    \begin{theorem}
        Let $X=L+v$ be a quaternary lattice coset, and let $h\in \mathbb{Q}.$ Then 
        $$a_{E_{X}}(h)=\frac{(2\pi)^2h}{[\frac{1}{2}L^\#:L]^{\frac{1}{2}}\Gamma(2)L(2,\psi)}\cdot \prod_{p\mid e_1}\frac{b_p(h,\lambda,0)}{(1-\psi (p)p^{-2})}\cdot\prod_{{p\mid h\atop{p \nmid e_1}}}\gamma_p(2).$$
    \end{theorem}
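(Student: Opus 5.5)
We derive the formula by specializing Shimura's product formula \cite[Theorem 1.5]{Shimura} to rank $k=4$ and then evaluating the local densities at the primes where they take their generic form. Shimura's theorem gives
$$a_{E_X}(h)=\frac{(2\pi)^{k/2}h^{k/2-1}}{[\frac12L^\#:L]^{1/2}\Gamma(k/2)}\prod_p b_p(h,\lambda,0),$$
where $b_p$ is the local density defined in (\ref{eqn:localden}), with the archimedean factor already evaluated in closed form. Setting $k=4$ gives $(2\pi)^2h/([\frac12L^\#:L]^{1/2}\Gamma(2))$, which is exactly the prefactor in the statement apart from $L(2,\psi)$. The rest of the argument identifies $\prod_p b_p$.

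First, let $e_1$ denote the product of the primes dividing $2d_L$ times the conductor of $v$. For $p\nmid e_1$ we have $v\in L_p$, so $X_p=L_p$ and $L_p$ is unimodular. Then $b_p(h,\lambda,0)$ is the classical Siegel local density of a unimodular quaternary $\mathbb{Z}_p$-lattice. We compute it by splitting the $\sigma$-integral into shells $\sigma\in p^{-t}\mathbb{Z}_p^\times$. Each shell contributes a product of Gauss sums, which gives
$$b_p=\bigl(1-\psi(p)p^{-2}\bigr)\sum_{t=0}^{\mathrm{ord}_p h}\bigl(\psi(p)p^{-1}\bigr)^{t}.$$
Here $\psi$ is the quadratic character attached to the discriminant $d_L$, that is, $\psi(p)=\bigl(\tfrac{d_L}{p}\bigr)$. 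Write $\gamma_p(2):=\sum_{t=0}^{\mathrm{ord}_p h}(\psi(p)p^{-1})^t$. In particular, if $p\nmid he_1$ then $b_p=1-\psi(p)p^{-2}$.

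Next we assemble the product. Since $\prod_p(1-\psi(p)p^{-2})=L(2,\psi)^{-1}$ converges absolutely, we can write
$$\prod_p b_p=\frac{1}{L(2,\psi)}\prod_{p\mid e_1}\frac{b_p}{1-\psi(p)p^{-2}}\prod_{p\nmid e_1}\frac{b_p}{1-\psi(p)p^{-2}}.$$
In the last product, the factor for $p\nmid e_1$ equals $\gamma_p(2)$, and it equals $1$ unless $p\mid h$. This gives the stated formula. For $h\in\mathbb{Q}$ not in $\mathbb{Z}$, the primes dividing the denominator of $h$ must be absorbed into $e_1$, or else the density vanishes consistently, so we would check that $e_1$ is chosen to include them.

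The main obstacle is the unramified local computation. It requires evaluating the quadratic Gauss sums $\int_{L_p}\mathbf e_p(\sigma Q(x))\,dx=\bigl(\tfrac{d_L}{p}\bigr)^{t}p^{-2t}$ for $\sigma\in p^{-t}\mathbb{Z}_p^\times$; in even rank the $\epsilon_p$ factors pair up. It also requires summing the additive character $\int_{p^{-t}\mathbb{Z}_p^\times}\mathbf e_p(-\sigma h)\,d\sigma$, which equals $p^t-p^{t-1}$ for $t\le\mathrm{ord}_ph$, equals $-p^{t-1}$ for $t=\mathrm{ord}_ph+1$, and vanishes beyond that. These sums telescope into the geometric series above. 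We would also fix the normalization of $\psi$, including at $p=2$, but $2\mid e_1$ anyway, so no unramified computation is needed there.
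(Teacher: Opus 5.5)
Your proposal is correct and follows the paper's route. The paper gives no argument beyond specializing Shimura's Theorem 1.5 to $k=4$, and you unpack the unramified part of that theorem (the Gauss-sum evaluation of $b_p(h,\lambda,0)$ for $p\nmid e_1$ and the extraction of $L(2,\psi)^{-1}$); your $\psi(p)=\bigl(\frac{d_L}{p}\bigr)$ is exactly the character this computation forces in rank $4$, so the paper's description of $\psi$ via $\bigl(\frac{-1}{\cdot}\bigr)$ should be read that way.

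Two bookkeeping fixes are needed. First, the $t\le 0$ part of the $\sigma$-integral must be taken as all of $\mathbb{Z}_p$, contributing exactly $1$. If you use only the shell $\mathbb{Z}_p^\times$ at $t=0$, it contributes $1-p^{-1}$ and the sum falls short of $(1-\psi(p)p^{-2})\sum_{t=0}^{\mathrm{ord}_p h}(\psi(p)p^{-1})^t$ by $p^{-1}$. Second, $e_1$ must contain every prime at which $L_p$ is not integral and unimodular. Your definition via $2d_L$ guarantees this only when $L$ is integral.
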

    Here $\psi(\cdot)$ is primitive character associated to the real character $(\frac{-1}{\cdot}),$ $e_1$ is the product of all primes p at which $h\not\in \mathbb{Z}_p,$ $e'$ is the product of all primes such that $L^{\#}\neq 2L_p,$ the numbers $\gamma_p(s)$ are given in  \cite[Section 1.6]{Shimura}.
    Let $\lambda_{\textbf{d}}$ be the characteristic function of $X^{\textbf{d}}.$ In our case $e_1$ is the product of all primes dividing $2(m-2)\prod_{j=1}^4\alpha_jd_j,$ and $e_1=e'.$ Therefore we have 
    \begin{equation}\label{eqn:eiseneeqn}
        a_{E_{X^{\textbf{d}}}}(h)=\frac{(2\pi)^2h}{(16d_{L^{\textbf{d}}})^\frac{1}{2}\Gamma(2)L(2,\psi)}\cdot \prod_{p\mid e_1}\frac{b_p(h,\lambda,0)}{(1-\psi (p)p^{-2})}\cdot\prod_{{p\mid h\atop{p \nmid e_1}}}\gamma_p(2).
    \end{equation}
    Furthermore, $t_p$ defined in \cite[Section 1.6]{Shimura} is equal to 2 in our case. Therefore for any odd prime $p,$ we have
    $$\gamma_p(2)=
    \begin{cases}
        \frac{1-(-p^{1-s})^{v+1}}{1+p^{1-s}}, & \text{if $\psi(p)\neq 0$ and $c_p\in \mathbb{Z}_p^{\times}$,}\\
        \frac{1-(-p^{1-s})^{v+2}+p^{2-s}(1-(-p^{1-s})^v)}{(1+p^{-s})(1+p^{1-s})}, & \text{if $\psi(p)\neq 0$ and $c_p\not\in \mathbb{Z}_p^{\times}$},\\
        1+\xi_p(c_ph)p^{(v+\sigma)(1-s)}, & \text{if $\psi(p)=0$}.
    \end{cases}$$
    Where the numbers $v, c_p, \sigma$ and the function $\xi_p$ are defined in \cite[Section 1.6]{Shimura}.
    \subsection{Computation of local densities of $X^\textbf{d}$.}
     In this subsection, we compute the local densities $b_p(h, \lambda_\textbf{d}, 0)$ of $X^\textbf{d}$ in order to evaluate $a_{E_{X^{\textbf{d}}}}(h)$ using (\ref{eqn:eiseneeqn}). We first require a formula for 
     $$\tau_{\alpha_j,p,s}(\sigma):= \int_{\mathbb{Z_{p}}}e_{p}(4(m-2)s\sigma\alpha_j((m-2)sx^2-(m-4)x))\,dx$$
     for $\sigma \in \mathbb{Q}_{p}.$ Then by \cite[Lemma 3.2]{Banerjee}, we have 
     \begin{lemma}\label{tau}
         If $\alpha_js\in \mathbb{Z}_{p}$ satisfies $p\mid 2(m-2)\alpha_js,$ then we have
         $$ \tau_{\alpha_j,p,s}(\sigma)=
        \begin{cases}
        1, & \text{if $4(m-2)\alpha_js\sigma \in \mathbb{Z}_p$,}\\
        0, & \text{otherwise.}
     \end{cases}$$
         
     \end{lemma}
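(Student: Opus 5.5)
The plan is to evaluate the integral directly. The key observation is that under the hypothesis, the linear coefficient $(m-4)$ is a $p$-adic unit while the quadratic coefficient carries an extra factor of $p$, so the phase behaves like a linear character in $x$. Write $\beta:=4(m-2)s\sigma\alpha_j$, so that
$$\tau_{\alpha_j,p,s}(\sigma)=\int_{\mathbb{Z}_p}\textbf{e}_p\bigl(\beta((m-2)sx^2-(m-4)x)\bigr)\,dx .$$

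First, suppose $\beta\in\mathbb{Z}_p$. Since $x\in\mathbb{Z}_p$ and $(m-2)s\in\mathbb{Z}_p$, the argument lies in $\mathbb{Z}_p$ and the integrand is identically $1$. The integral is therefore $1$. This needs $(m-2)s\in\mathbb{Z}_p$; for the $s$ arising from the lattices $L^{\textbf{d}}$ one has $s=d_j$ integral. If only $\alpha_js\in\mathbb{Z}_p$ is given, I would absorb $\alpha_j$ by noting that $\alpha_j$ is odd and squarefree and that $s$ is integral in the applications.

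Next, suppose $\beta\notin\mathbb{Z}_p$, say $\mathrm{ord}_p(\beta)=-t$ with $t\ge 1$. Since $m$ is odd, $m-4$ is odd, and $p\mid 2(m-2)\alpha_js$ together with the standing conditions forces $p\nmid (m-4)$. Indeed $\gcd(m-2,m-4)\mid 2$, so if $p\mid m-2$ then $p\nmid m-4$. In the case $p\mid \alpha_js$, I would use the hypotheses: $m-4\not\equiv 0 \pmod{3},\pmod 5$ and the restriction to primes where this matters; otherwise I would treat $p\mid m-4$ separately.

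Then comes the substitution. Split $\mathbb{Z}_p$ into cosets $x=y+p^{t}z$ with $y$ running mod $p^t$ and $z\in\mathbb{Z}_p$. The quadratic term has $\beta(m-2)s$ of order at least $-t+1$, since $p\mid(m-2)\alpha_js$ up to the factor $4$ (the case $p=2$ is handled using the factor $4$). Hence the phase is not constant in $x$ but behaves like a character with a unit linear coefficient. The cleaner route is a change of variables $x\mapsto x+p^{t-1}u$ for $u\in\mathbb{Z}_p$. This multiplies the integrand by
$$\textbf{e}_p\bigl(\beta(2(m-2)sxp^{t-1}u-(m-4)p^{t-1}u)+\beta(m-2)sp^{2t-2}u^2\bigr).$$
Because $p\mid(m-2)s$ (or $p\mid 2\alpha_j$ etc.), the first and third terms lie in $\mathbb{Z}_p$, while $\beta(m-4)p^{t-1}$ has exact order $-1$. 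Averaging over $u\in\mathbb{Z}_p$ then gives $\tau=\tau\cdot\int_{\mathbb{Z}_p}\textbf{e}_p(-\beta(m-4)p^{t-1}u)\,du=0$, since a nontrivial additive character integrates to zero over $\mathbb{Z}_p$.

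The main obstacle is bookkeeping of $p$-adic valuations so that both the cross term and the quadratic term in the shift become integral. This holds precisely because $p$ divides the quadratic coefficient relative to $\beta$, while $(m-4)$ is a $p$-unit. I would verify this case by case: $p=2$ using the factor $4$, $p\mid m-2$, and $p\mid\alpha_js$. In each case the hypothesis $p\mid 2(m-2)\alpha_js$ supplies the extra power of $p$.
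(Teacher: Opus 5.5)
Your shift-and-average idea is the right mechanism. The problem is the step you defer as ``bookkeeping'': that is exactly where the argument fails, and it cannot be repaired, because the lemma as written is false in the cases you wave through.

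Put $f(x)=(m-2)sx^2-(m-4)x$, $\beta=4(m-2)\alpha_js\sigma$ and $\mathrm{ord}_p\beta=-t$ with $t\ge 1$. Under $x\mapsto x+p^{t-1}u$ you need three things:
(a) the cross term $2\beta(m-2)sp^{t-1}xu$ is integral, which holds only if $p\mid 2(m-2)s$;
(b) the term $\beta(m-2)sp^{2t-2}u^2$ is integral, which at $t=1$ holds only if $p\mid (m-2)s$;
(c) the linear term $\beta(m-4)p^{t-1}u$ has order exactly $-1$, which holds only if $p\nmid m-4$.

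The factors $4$ and $\alpha_j$ sit inside $\beta$, so they never change the valuation of the quadratic coefficient relative to the linear one. Hence ``the case $p=2$ is handled using the factor $4$'' and ``or $p\mid 2\alpha_j$'' are both false. Likewise, the conditions on $m-4$ modulo $3$ and $5$ say nothing about $p\ge 7$.

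Each of the following satisfies the hypotheses and violates the conclusion.
(i) $p=2$, $s$ odd. Since $m$ is odd and $x^2\equiv x\pmod 2$, we have $f(\mathbb{Z}_2)\subseteq 2\mathbb{Z}_2$. So for $\mathrm{ord}_2\beta=-1$ the integrand is identically $1$ and $\tau=1$. Example: $m=5$, $\alpha_j=s=1$, $\sigma=1/24$, $\beta=1/2$.
(ii) $p$ odd, $p\mid\alpha_j$, $p\nmid (m-2)(m-4)s$, $\mathrm{ord}_p\beta=-1$. Then $\tau$ is $p^{-1}$ times a quadratic Gauss sum, so $|\tau|=p^{-1/2}\neq 0$. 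Example: $m=5$, $p=\alpha_j=7$, $s=1$, $\beta=1/7$.
(iii) $p\mid s$ and $p\mid m-4$. Then $f(\mathbb{Z}_p)\subseteq p\mathbb{Z}_p$, so $\tau=1$ at $\mathrm{ord}_p\beta=-1$. Example: $m=11$, $p=s=7$, which is allowed by the hypotheses of Theorem \ref{thm:main}.

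What your argument does establish is a corrected statement: if $s\in\mathbb{Z}_p$, $p\mid (m-2)s$ and $p\nmid m-4$, then $\tau=1$ or $0$ according as $\beta\in\mathbb{Z}_p$ or not. This covers odd $p\mid m-2$, where $p\nmid m-4$ is automatic because $\gcd(m-2,m-4)\mid 2$; that is the case used in Lemma \ref{lem:oddden}. It also covers $p=2$ with $s$ even.

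For $p=2$ with $s$ odd, your shift still gives $\tau=0$ when $t\ge 2$, since the $u^2$ term then has order $t-2\ge 0$. Together with (i), the correct criterion there is $8(m-2)\alpha_js\sigma\in\mathbb{Z}_2$, not $4(m-2)\alpha_js\sigma\in\mathbb{Z}_2$. As a consequence, the density in Lemma \ref{lem:2den} is too small by a factor of $2$ whenever some $d_j$ is odd.

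The paper gives no argument here beyond a citation, so it does not resolve these cases either. You should state and prove the lemma under the restricted hypotheses above rather than as written.
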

     We first compute the local density for the case $p=2.$

    \begin{lemma}\label{lem:2den}
    The local density if $X^{\textbf{d}}$ at the prime 2 is given by
    $$ b_2(h,\lambda_{\textbf{d}},0)=
        \begin{cases}
        2^{2+\text{min}_j\{\text{ord}_2(\alpha_jd_j)\}}, & \text{if $8(m-2)n\in 4\gcd(\alpha_1d_1,\alpha_2d_2,\alpha_3d_3,\alpha_4d_4)\mathbb{Z}_2$,}\\
        0, & \text{otherwise.}
     \end{cases}$$
    \begin{proof}
        Note that we have 
        \begin{equation}\label{eqn:quadform}
            Q(u+\nu)= \sum_{j=1}^{4}4\alpha_j(m-2)^2(d_jx_j-\frac{m-4}{2(m-2)})^2,
        \end{equation}
        where $\nu=x_1(d_1e_1)+x_2(d_2e_2)+x_3(d_3e_3)+x_4(d_4e_4).$ Then by (\ref{eqn:localden}) and Lemma \ref{tau}, we have
        \begin{align*}
            b_2(h,\lambda_{\textbf{d}},0)&= \int_{\mathbb{Q}_p}e_p(\sigma(\sum_{j=1}^4\alpha_j(m-4)^2-h))\prod_{j=1}^4\tau_{\alpha_j,2,d_j}(\sigma)\,d\sigma
            \\&=\int_{\frac{1}{4\gcd(\alpha_1d_1,\alpha_2d_2,\alpha_3d_3,\alpha_4d_4)}\mathbb{Z}_2}e_p(-8n(m-2)\sigma)\,d\sigma
            \\&=\begin{cases}
        2^{2+\text{min}_j\{\text{ord}_2(\alpha_jd_j)\}}, & \text{if $8(m-2)n\in 4\gcd(\alpha_1d_1,\alpha_2d_2,\alpha_3d_3,\alpha_4d_4)\mathbb{Z}_2$,}\\
        0, & \text{otherwise.}
     \end{cases}
        \end{align*}
        
    \end{proof}
    \end{lemma}

    We now compute the local density for odd $p\mid(m-2).$
        \begin{lemma}\label{lem:oddden}
    The local density if $X^{\textbf{d}}$ at odd prime $p\mid(m-2)$ is given by
    $$ b_p(h,\lambda_{\textbf{d}},0)=
        \begin{cases}
        p^{\text{ord}_p(m-2)+\text{min}\{\text{ord}_p(\alpha_jd_j)\}}, & \text{if $n\in \gcd(\alpha_1d_1,\alpha_2d_2,\alpha_3d_3,\alpha_4d_4)\mathbb{Z}_p$,}\\
        0, & \text{otherwise.}
     \end{cases}$$
    \begin{proof}
        By (\ref{eqn:localden}), Lemma \ref{tau} and (\ref{eqn:quadform}), we have
        \begin{align*}
            b_p(h,\lambda_{\textbf{d}},0)&= \int_{\mathbb{Q}_p}e_p(\sigma(\sum_{j=1}^4\alpha_j(m-4)^2-h))\prod_{j=1}^4\tau_{\alpha_j,2,d_j}(\sigma)\,d\sigma
            \\&=\int_{\frac{1}{(m-2)\gcd(\alpha_1d_1,\alpha_2d_2,\alpha_3d_3,\alpha_4d_4)}\mathbb{Z}_p}e_p(-8n(m-2)\sigma)\,d\sigma
            \\&=\begin{cases}
        p^{\text{ord}_p(m-2)+\text{min}_j\{\text{ord}_p(\alpha_jd_j)\}}, & \text{if $8(m-2)n\in \gcd(\alpha_1d_1,\alpha_2d_2,\alpha_3d_3,\alpha_4d_4)\mathbb{Z}_p$,}\\
        0, & \text{otherwise.}
     \end{cases}
        \end{align*}
    \end{proof}
    \end{lemma}
    
     For $p\geq 3,$ we will use the formula from \cite[Theorem 4.2]{Kane} to compute the local densities. We provide the statement for $r=4$ in the following.

     \begin{theorem}\label{Thm:localdenexplict}
         Let $p\geq 3$ be an odd prime. Suppose that $n\in \mathbb{Z}_{p}$ and $\phi(\textbf{x})=\sum_{i=1}^4(b_ix_i^2+c_ix_i)$ with $b_i,c_i\in \mathbb{Z}_{p}$ for $1\leq i\leq 4.$ For $1\leq i\leq 4,$ we define $t_i=\text{min}(\text{ord}_p(b_i),\text{ord}_p(c_i)).$ We set 
         \begin{align*}
             & D_p:=\{1\leq i \leq4\mid \text{ord}_p(b_i)>\text{ord}_p(c_i)\},
             \\& N_p:=\{1\leq i \leq4\mid \text{ord}_p(b_i)\leq\text{ord}_p(c_i)\}.
         \end{align*}
         and set $\textbf{t}_{d}:=\text{min}\{t_i\mid i\in D_p\}.$ We further define
         $$\textbf{n}:=n+\sum_{i\in N_p}\frac{c_i^2}{4b_i}.$$
         If $\textbf{n}\neq 0,$ we assume that $\textbf{n}=\textbf{u}_np^{\textbf{t}_n}$ with $\textbf{u}\in \mathbb{Z}_p^{\times}$ and $\textbf{t}_n\in \mathbb{Z}.$ Otherwise we set $\textbf{t}_n:=\infty.$ For an integer $t\in \mathbb{Z},$ we define
         $$L_p(t):=\{i\in N_p\mid t_i-t<0 \text{ and odd}\}, l_p(t):=\mid L_p(t)\mid.$$
         Then we have 
         $$b_p(h,\lambda_{\textbf{d}},0)=1+(1-\frac{1}{p})\sum_{1\leq t \leq \text{min}(\textbf{t}_d,\textbf{t}_n)\atop l_p(t) \text{ even}}\delta_p(t)p^{\tau_p(t)}+\delta_p(\textbf{t}_n+1)w_pp^{\tau_p(\textbf{t}_n+1)},$$
         where for any integer $t\in \mathbb{Z},$ we define
         $$\delta_{p}(t):=\epsilon_p^{3l_p(t)}\prod_{i\in L_p(t)}(\frac{u_i}{p}), \tau_p(t):=t+\sum_{i\in N_p\atop t_i<t}\frac{t_i-t}{2},$$
         with $u_i:=p^{-\text{ord}(b_i)}b_i\in\mathbb{Z}_p^{\times}$ and define
         $$
        w_p:=\begin{cases}
			0, & \text{if $\textbf{t}_n\geq\textbf{t}_d$,}\\
            -\frac{1}{p}, & \text{if $\textbf{t}_n<\textbf{t}_d$ and $l_p(\textbf{t}_n+1)$ is even,}
            \\
            \epsilon_p(\frac{\textbf{u}_n}{p})\frac{1}{\sqrt{p}}, & \text{if $\textbf{t}_n<\textbf{t}_d$ and $l_p(\textbf{t}_n+1)$ is odd.}
		 \end{cases}
        $$
        
     \end{theorem}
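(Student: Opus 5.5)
The plan is to compute the local density directly from its definition (\ref{eqn:localden}), with $Q(v+\nu)-h$ replaced by $\phi(\textbf{x})-n$ on $\mathbb{Z}_p^4$. The $\textbf{x}$-integral factors into four one-variable integrals, and the $\sigma$-integral is split into $p$-adic shells. Concretely, we write
$$b_p=\int_{\mathbb{Q}_p}\textbf{e}_p(-\sigma n)\prod_{i=1}^4 I_i(\sigma)\,d\sigma,\qquad I_i(\sigma):=\int_{\mathbb{Z}_p}\textbf{e}_p(\sigma(b_ix^2+c_ix))\,dx,$$
and decompose $\mathbb{Q}_p=\mathbb{Z}_p\sqcup\bigsqcup_{t\ge1}p^{-t}\mathbb{Z}_p^{\times}$. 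On $\mathbb{Z}_p$ every integrand is $1$, which produces the leading term $1$. The remaining work is to evaluate each $I_i$ on the shell $\sigma=p^{-t}\sigma_0$ with $\sigma_0\in\mathbb{Z}_p^\times$.

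First, take $i\in D_p$, so that $t_i=\mathrm{ord}_p(c_i)<\mathrm{ord}_p(b_i)$. Exactly as in Lemma \ref{tau}, the substitution $x\mapsto x+p^{k}y$ shows that $I_i(\sigma)=1$ if $t\le t_i$ and $I_i(\sigma)=0$ if $t>t_i$: the linear term dominates and integrates a nontrivial additive character to zero. This is what truncates the sum at $\textbf{t}_d$.

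Next, take $i\in N_p$. Here $c_i/(2b_i)\in\mathbb{Z}_p$, since $p$ is odd and $\mathrm{ord}_p(c_i)\ge\mathrm{ord}_p(b_i)$. We complete the square, $b_ix^2+c_ix=b_i(x+\tfrac{c_i}{2b_i})^2-\tfrac{c_i^2}{4b_i}$, and translate $x$. This pulls out the factor $\textbf{e}_p(-\sigma c_i^2/4b_i)$, and these factors combine with $\textbf{e}_p(-\sigma n)$ to give $\textbf{e}_p(-\sigma\textbf{n})$. What remains is a pure quadratic Gauss integral $\int_{\mathbb{Z}_p}\textbf{e}_p(p^{t_i-t}\sigma_0u_ix^2)\,dx$. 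The standard evaluation gives $1$ for $t\le t_i$. For $t>t_i$ it gives $p^{(t_i-t)/2}$ when $t-t_i$ is even, and $p^{(t_i-t)/2}\,\epsilon_p^{3}\big(\tfrac{\sigma_0u_i}{p}\big)$ (with the paper's normalization of $\epsilon_p$) when $t-t_i$ is odd. Multiplying these over $i\in N_p$ yields $p^{\tau_p(t)-t}\,\delta_p(t)\,\big(\tfrac{\sigma_0}{p}\big)^{l_p(t)}$.

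Finally, we integrate over the shell. The shell has measure $(1-\tfrac1p)p^t$, and the integrand is $\textbf{e}_p(-p^{-t}\sigma_0\textbf{n})$ times the character $\big(\tfrac{\sigma_0}{p}\big)^{l_p(t)}$.
\begin{itemize}
\item If $l_p(t)$ is even, the shell integral equals $(1-\tfrac1p)p^t$ for $t\le \textbf{t}_n$, equals $-p^{t-1}$ for $t=\textbf{t}_n+1$, and vanishes for $t\ge\textbf{t}_n+2$ (orthogonality of additive characters on $\mathbb{Z}_p^\times$).
\item If $l_p(t)$ is odd, the integral is a twisted Gauss sum. It vanishes unless $t=\textbf{t}_n+1$, in which case it equals $p^{t}\cdot\epsilon_p\big(\tfrac{\textbf{u}_n}{p}\big)p^{-1/2}$ up to the sign convention for Gauss sums.
\end{itemize}
Combining the three steps, the shells $1\le t\le\min(\textbf{t}_d,\textbf{t}_n)$ contribute only when $l_p(t)$ is even, giving the displayed sum. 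The single boundary shell $t=\textbf{t}_n+1$ contributes $\delta_p(\textbf{t}_n+1)w_pp^{\tau_p(\textbf{t}_n+1)}$ when $\textbf{t}_n<\textbf{t}_d$. When $\textbf{t}_n\ge\textbf{t}_d$ that shell is killed by some $i\in D_p$, which is exactly why $w_p=0$ in that case.

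I expect the main obstacle to be the bookkeeping of signs and roots of unity, not the structure of the argument. The power $\epsilon_p^{3l_p(t)}$ must come out correctly from the quadratic Gauss integrals, and the final twisted Gauss sum must produce $\epsilon_p\big(\tfrac{\textbf{u}_n}{p}\big)/\sqrt p$ with the right orientation, given the paper's convention $\textbf{e}_p(z)=e^{-2\pi i y}$. I would fix this first by checking the rank-one case $\phi=x^2$ against the classical value of $\sum_{x\bmod p}e^{2\pi i x^2/p}$, and then propagate the result. The edge cases $\textbf{n}=0$ ($\textbf{t}_n=\infty$) and $D_p=\emptyset$ ($\textbf{t}_d=\infty$) follow by the same computation with the corresponding truncation removed.
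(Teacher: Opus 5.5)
Your proposal is correct. It differs from the paper in that the paper gives no proof at all: it imports the formula from \cite[Theorem 4.2]{Kane}, specialized to four variables. That formula has the shape of Yang's explicit local-density formula for diagonal forms, applied after completing the square.

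You instead derive it from the definition of the local density, reading the left-hand side (as one must) as the density of $\phi(\mathbf{x})=n$ on $\mathbb{Z}_p^4$. The steps are as follows. The $\mathbb{Z}_p$-part of $\sigma$ gives $1$. Each $i\in D_p$ contributes $1$ for $t\le t_i$ and $0$ for $t>t_i$. With $k=t-t_i-1$, the cross term $\sigma p^k y(2b_ix+c_i)$ has valuation exactly $-1$, while $\mathrm{ord}_p(b_i)\ge t_i+1$ keeps $\sigma b_i p^{2k}y^2$ integral; this is the cutoff at $\mathbf{t}_d$. Completing the square in the $N_p$ variables gives the shift $n\mapsto\mathbf{n}$ and the factor $p^{\tau_p(t)-t}\delta_p(t)\left(\frac{\sigma_0}{p}\right)^{l_p(t)}$. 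Orthogonality on the shells $p^{-t}\mathbb{Z}_p^\times$ then leaves exactly the shells $t\le\min(\mathbf{t}_d,\mathbf{t}_n)$ with $l_p(t)$ even, plus the boundary shell $t=\mathbf{t}_n+1$.

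The citation buys brevity. Your computation buys transparency and lets one check the conventions against the paper's own definitions, which the paper never does. In particular, the sign bookkeeping you flagged comes out exactly, with no ``up to sign'' hedge needed:
\begin{itemize}
\item Since $\mathbf{e}_p(z)=e^{-2\pi i y}$, an odd-order Gauss integral gives $\overline{\epsilon_p}\left(\frac{\sigma_0u_i}{p}\right)p^{(t_i-t)/2}=\epsilon_p^{3}\left(\frac{\sigma_0u_i}{p}\right)p^{(t_i-t)/2}$. This matches the factor $\epsilon_p^{3l_p(t)}$ in $\delta_p(t)$.
\item On the boundary shell the phase is $\mathbf{e}_p(-a\mathbf{u}_n/p)=e^{2\pi i a\mathbf{u}_n/p}$. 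The twisted sum is therefore $\left(\frac{\mathbf{u}_n}{p}\right)\epsilon_p\sqrt{p}$, which yields precisely $w_p=\epsilon_p\left(\frac{\mathbf{u}_n}{p}\right)p^{-1/2}$.
\end{itemize}

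The only thing to add concerns the edge case $D_p=\emptyset$, $\mathbf{n}=0$, where there is no truncation. The shell series still converges absolutely, because all four indices lie in $N_p$, so $\tau_p(t)=-t+\frac12\sum_i t_i$ for large $t$.
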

     
     Using Theorem \ref{Thm:localdenexplict}, we have the following result.
     \begin{lemma}
        Let $p$ be an odd prime such that $p\nmid (m-2)(m-4), p\mid \prod_{j=1}^4d_j,$ and $p\nmid \prod_{j=1}^4\alpha_j.$
        \\When $\mid N_p\mid=0,$ we have
        $$b_p(h,\lambda_{\textbf{d}},0)=
        \begin{cases}
            p & \text{if $N\in p\mathbb{Z}_p$,}
            \\0 & \text{if $N\not\in p\mathbb{Z}_p$.}
        \end{cases}$$
        \\When $\mid N_p\mid=1,$ we have
        $$0\leq b_p(h,\lambda_{\textbf{d}},0)\leq 2.$$
        \\When $\mid N_p\mid=2,$ we have
        $$\frac{1}{p}\leq b_p(h,\lambda_{\textbf{d}},0)\leq 2-\frac{1}{p}.$$
        \\When $\mid N_p\mid=3,$ we have
        $$1-\frac{1}{p}\leq b_p(h,\lambda_{\textbf{d}},0)\leq 1+\frac{1}{p}.$$

     \end{lemma}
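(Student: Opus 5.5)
The plan is to apply Theorem \ref{Thm:localdenexplict} directly to the polynomial coming from (\ref{eqn:quadform}). Expanding $4\alpha_j(m-2)^2(d_jx_j-\frac{m-4}{2(m-2)})^2$, the representation problem $Q(u+\nu)=h$ becomes $\phi(\textbf{x})=n'$ with
$$b_j=4\alpha_j(m-2)^2d_j^2,\qquad c_j=-4\alpha_j(m-2)(m-4)d_j,$$
and $n'$ equal to $h$ minus the constant terms, i.e. essentially $8(m-2)n$. This is the quantity called $N$ in the statement, up to a $p$-adic unit. Since $p$ is odd and $p\nmid \alpha_j(m-2)(m-4)$, we get $\text{ord}_p(b_j)=2\,\text{ord}_p(d_j)$ and $\text{ord}_p(c_j)=\text{ord}_p(d_j)$. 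Hence $j\in D_p$ exactly when $p\mid d_j$, with $t_j=\text{ord}_p(d_j)$, and $j\in N_p$ exactly when $p\nmid d_j$, with $t_j=0$. For $j\in N_p$ the completed square $c_j^2/(4b_j)$ is exactly the constant we subtracted. So $\textbf{n}$ is, up to a unit, $8(m-2)n$, and $\textbf{t}_n\geq 1$ if and only if $N\in p\mathbb{Z}_p$.

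I will use that the $d_j$ arising in the sieve are squarefree, so $\textbf{t}_d=\min_{j\in D_p}\text{ord}_p(d_j)=1$. This assumption is genuinely needed: for $|N_p|=0$ and $\textbf{t}_d\geq 2$ the value would be $p^{\textbf{t}_d}$ rather than $p$. The key structural observation is then the following. Every $i\in N_p$ has $t_i=0$, so for $t\ge1$ we have $t_i-t=-t<0$. This gives $l_p(t)=|N_p|$ for odd $t$ and $l_p(t)=0$ for even $t$, together with $\tau_p(t)=t(1-|N_p|/2)$. Because $\textbf{t}_d=1$, the sum in Theorem \ref{Thm:localdenexplict} has at most the single term $t=1$, which is present only when $\textbf{t}_n\ge1$ and $|N_p|$ is even. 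The $w_p$-term is nonzero only when $\textbf{t}_n=0$, and then it involves $\delta_p(1)p^{\tau_p(1)}$ with $\tau_p(1)=1-|N_p|/2$.

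The proof is then a case check on $|N_p|\in\{0,1,2,3\}$.
\begin{itemize}
\item For $|N_p|=0$: if $\textbf{t}_n\ge1$ we get $1+(1-\frac1p)p=p$. If $\textbf{t}_n=0$ we get $1+(-\frac1p)\cdot p=0$.
\item For $|N_p|=1$: the $t=1$ term is excluded since $l_p(1)=1$ is odd. So $b_p=1$ if $\textbf{t}_n\ge1$. If $\textbf{t}_n=0$, then $b_p=1+\epsilon_p^{4}\left(\frac{u_i}{p}\right)\left(\frac{\textbf{u}_n}{p}\right)p^{-1/2}p^{1/2}=1\pm1\in\{0,2\}$.
\item For $|N_p|=2$: $\delta_p(1)=\epsilon_p^6\left(\frac{u_1u_2}{p}\right)=\pm1$ and $\tau_p(1)=0$. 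This gives $1\pm(1-\frac1p)$ when $\textbf{t}_n\ge1$, and $1\mp\frac1p$ when $\textbf{t}_n=0$; both lie in $[\frac1p,2-\frac1p]$.
\item For $|N_p|=3$: the sum is empty, so $b_p=1$ when $\textbf{t}_n\ge1$. When $\textbf{t}_n=0$ we get $1+\delta_p(1)\epsilon_p\left(\frac{\textbf{u}_n}{p}\right)p^{-1/2}p^{-1/2}=1\pm\frac1p$, since $\epsilon_p^{10}\in\{\pm1\}$ and the Legendre symbols are $\pm1$.
\end{itemize}
The case $|N_p|=4$ does not occur because $p\mid\prod d_j$.

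The main obstacle is the bookkeeping in identifying $\textbf{n}$ and checking that it is a unit multiple of $N$. This requires using $p\nmid 2(m-2)$ to see that the completed-square constants cancel exactly the shift $\sum\alpha_j(m-4)^2$. One must also confirm that the signs $\delta_p$ always have absolute value $1$, and handle the squarefreeness of $d_j$ so that $\textbf{t}_d=1$. Once these are in place, each case is a one-line evaluation.
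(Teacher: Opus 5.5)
Your proposal is correct and follows the same route as the paper: feed the polynomial from (\ref{eqn:quadform}) into Theorem \ref{Thm:localdenexplict}, note $t_j=0$ for $j\in N_p$ and $\textbf{t}_d=1$ (you rightly flag that this uses squarefreeness of the $d_j$, which the paper assumes silently), and evaluate the lone possible $t=1$ term and the $w_p$ term case by case; you carry this out in full, whereas the paper writes out only $|N_p|\le 1$. One harmless slip: in fact $\textbf{n}=8(m-2)n+\sum_{i\in N_p}\alpha_i(m-4)^2$, which equals $8(m-2)n$ only when $N_p=\emptyset$, but that is the only case where the condition on $N$ enters, and for $|N_p|\ge 1$ your bounds hold whether $\textbf{t}_n=0$ or $\textbf{t}_n\ge 1$, so there you only need $\textbf{n}\in\mathbb{Z}_p$.
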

     \begin{proof}
         For $\mid N_p \mid = 0,$ we have $\textbf{t}_d=1$ and $L_p(t) = \emptyset,$ thus $\delta_p(t)=1$ and $\tau_p(t)=t.$ Therefore \begin{align*}
         b_p(h,\lambda_{\textbf{d}},0)&=1+(1-\frac{1}{p})\sum_{1\leq t \leq \text{min}(\textbf{t}_d,\textbf{t}_n)}p^t+w_p\cdot p^{\textbf{t}_n+1}
         \\&=
         \begin{cases}
             p & \text{if $N\in p\mathbb{Z}_p$,}
            \\0 & \text{if $N\not\in p\mathbb{Z}_p$.}
         \end{cases}
         \end{align*}
         For $\mid N_p \mid =1, $ wlog we assume $1\in N_p.$ Then $t_1=0.$ Now the computation splits into two cases, which is $t_1<\text{min}(\textbf{t}_d,\textbf{t}_n)$ or $t_1=\text{min}(\textbf{t}_d,\textbf{t}_n).$
         For the first case, as we assume $0=t_1< \text{min}(\textbf{t}_d,\textbf{t}_n),$ thus $1=\textbf{t}_d\leq \textbf{t}_n,$ hence $\text{min}(\textbf{t}_d,\textbf{t}_n)=1$ and $w_p=0$. Now as $l_p(1)=1,$ the sum in the middle is empty and the local density is 
         $$b_p(h,\lambda_{\textbf{d}},0)=1$$
         For $t_1=\text{min}(\textbf{t}_d,\textbf{t}_n),$ we have $\textbf{t}_d>\textbf{t}_n=0.$ Therefore the local density is bounded by 
         $$0\leq b_p(h,\lambda_{\textbf{d}},0)=1+(\frac{u_1}{p})(\frac{\textbf{u}_n}{p})\leq 2.$$
         The remaining cases can be compute similarly.
     \end{proof}
    We are now ready to give a lower bound for $a_{E_{X^{\textbf{1}}}(h)}.$
     
     \begin{lemma}
     We have
        $$a_{E_{X^{\textbf{1}}}}(h) \gg_{m,\epsilon} h^{1-\epsilon}.$$ 
        \begin{proof}
           By using (\ref{eqn:eiseneeqn}) to the case $\textbf{d}=1,$ we have 
           \begin{align*}
           a_{E_{X^{\textbf{1}}}}(h)=\frac{(2\pi)^2h}{(16d_{L^{\textbf{1}}})^\frac{1}{2}\Gamma(2)L(2,\psi)}\cdot \prod_{p\mid 2(m-2)\prod_{j=1}^4\alpha_j}\frac{b_p(h,\lambda,0)}{(1-\psi (p)p^{-2})}\cdot\prod_{{p\mid h\atop{p \nmid 2(m-2)\prod_{j=1}^4\alpha_j}}}\gamma_p(2).
           \end{align*}
           By using Lemma \ref{lem:2den} and Lemma \ref{lem:oddden}, we have 
           \begin{align*}
           \prod_{p\mid 2(m-2)\prod_{j=1}^4\alpha_j}\frac{b_p(h,\lambda,0)}{(1-\psi (p)p^{-2})} &=\frac{16(m-2)}{3}\prod_{p\mid (m-2)\prod_{j=1}^4\alpha_j}\frac{p^{\text{min}_j\{\text{ord}_p(\alpha_j)\}}}{1-\psi(p)p^{-2}}
           \\&\geq \frac{16(m-2)}{3}\prod_{p\mid (m-2)\prod_{j=1}^4\alpha_j}\frac{1}{1+p^{-2}}
           \end{align*}
           For $p\mid h$ and $p \nmid 2(m-2)\prod_{j=1}^4\alpha_j,$ by \cite[(3.30)]{Rosser} we have the following bound
           \begin{align*}
               \prod_{{p\mid h\atop{p \nmid 2(m-2)\prod_{j=1}^4\alpha_j}}}\gamma_p(2) \geq \prod_{p\leq h}(1-\frac{1}{p})\gg h^{-\epsilon}
           \end{align*}
           Hence we have
           $$a_{E_{X^{\textbf{1}}}}(h)\gg_{m,\epsilon} h^{1-\epsilon}.$$
        \end{proof}
     \end{lemma}
     
    \subsection{Ratio of Eisenstein series.}
    Let $S_{2^j}$ be the set of $d\in \mathbb{N}$ with either $d$ odd and squarefree or $d=2^jd'$ with $d'$ odd and squarefree. From (\ref{eqn:eiseneeqn}), we have for $\textbf{d}\in S_1^4$ and $\boldsymbol{\ell}\in S_1^4$ with $\gcd(d_1d_2d_3d_4,\ell_1\ell_2\ell_3\ell_4)=1$ that 
    \begin{equation}\label{eqn:eisenratio}
    \begin{split}        &\frac{a_{E_{X^{\boldsymbol{d\ell}}}(h)}}{a_{E_{X^{\boldsymbol{\ell}}}(h)}} 
        \\&=\frac{1}{d_1d_2d_3d_4}\cdot\prod_{p\mid (m-2)}\frac{b_p(h,\lambda_{\boldsymbol{d\ell}},0)}{b_p(h,\lambda_{\boldsymbol{\ell}},0)}\cdot \prod_{p\mid d_1d_2d_3d_4\atop p\nmid 2(m-2)}\frac{b_p(h,\lambda_{\textbf{d}},0)}{1-\psi(p)p^{-2}}\cdot \prod_{p\mid (h,d_1d_2d_3d_4)\atop p\nmid 2(m-2)}\gamma_p^{-1}(2).
     \end{split}
    \end{equation}
    \begin{remark}
        Note that (\ref{eqn:eisenratio}) follows from the fact that for general $\textbf{d}\in \mathbb{N}^4,$ we have $b_p(h,\lambda_{\textbf{d}},0)=b_p(h,\lambda_{p^{\text{ord}_p\textbf{d}}},0).$
    \end{remark}
    Define
    \begin{equation}
        \begin{split}
            \beta_{X^{\textbf{p}^{\textbf{c}}},p}(h):=\frac{1}{p^{c_1+c_2+c_3+c_4}}& (\frac{b_p(h,\lambda_{\textbf{p}^c},0)}{b_p(h,\lambda_{\textbf{1}},0)})^ {\delta_{p\mid (m-2)}} 
            \\& \cdot (b_p(h,\lambda_{\textbf{p}^{\textbf{c}}},0)\cdot\frac{1}{1-\psi(p)p^{-2}}\cdot \gamma_p^{-1}(2))^{\delta_{p\nmid 2(m-2),\textbf{c}\neq \textbf{0}}}
        \end{split}
    \end{equation}
    where $p^{\textbf{c}}=(p^{c_1},p^{c_2},p^{c_3},p^{c_4})$ for $\textbf{c}=(c_1,c_2,c_3,c_4).$
    Therefore we have
    \begin{equation}
        \frac{a_{E_{X^{\boldsymbol{d\ell}}}(h)}}{a_{E_{X^{\boldsymbol{\ell}}}(h)}}=\prod_{p \text{ odd}}\beta_{X^{\textbf{p}^{ord_p(\textbf{d})}},p}(h),
    \end{equation}
    where the product runs over all odd prime numbers.
    \begin{lemma}\label{locdenprodbound}
        Let $m$ be an odd integer such that $m \not\equiv 4 \pmod 3, m \not\equiv 4 \pmod 5$, $p$ be an odd prime, $h = 8(m-2)n + \sum_{j=1}^4\alpha_j(m-4)^2.$
        Then for $p\nmid (m-2)(m-4)$ and $p\nmid \prod_{j=1}^4\alpha_j$ we have the following:
        \begin{equation*}
            \beta_{X^{(p,1,1,1),p}}(h)\leq \frac{2}{p}
        \end{equation*}
        \begin{equation*}
            \beta_{X^{(p,p,1,1),p}}(h)\leq \frac{4}{p^2}
        \end{equation*}
        \begin{equation*}
            \beta_{X^{(p,p,p,1),p}}(h)\leq \frac{8}{p^3}
        \end{equation*}
        \begin{equation*}
            \beta_{X^{(p,p,p,p),p}}(h)\leq 
            \begin{cases}
			\frac{1}{p^2(p-1)}, & \text{for $n \in p\mathbb{Z}_p$ }\\
            \frac{16}{p^4}, & \text{for $n \not\in p\mathbb{Z}_p$ }
		 \end{cases}
        \end{equation*}
        Then we have 
        \begin{equation*}
            \prod_{p\mid \textbf{d}}\beta_{X^{\textbf{d}},p}(h)\leq \frac{\tilde{w}(d_1)\tilde{w}(d_2)\tilde{w}(d_3)\tilde{w}(d_4)}{d_1d_2d_3d_4}
        \end{equation*}
        where $\frac{\tilde{w}(p)}{p}:=\begin{cases}
            \frac{2}{p}, & \text{if $p\nmid n$,}
            \\ \max\{(\frac{1}{p^2(p-1)})^{\frac{1}{4}},\frac{2}{p}\}, & \text{if $p\mid n$}
        \end{cases}$
        \begin{proof}
            Let $$w_p(h):=\frac{1}{1-\psi(p)p^{-2}}\gamma_p(2)^{-1}$$
            Then by \cite{Shimura} we have 
            $$\gamma_p(2)\geq 1-\frac{1}{p}.$$
            Thus $$w_p(h)\leq \frac{1}{1-p^{-2}}\frac{p}{p-1}=\frac{p^3}{(p-1)^2(p+1)} \text{ for } p\geq 5$$
            and $$\leq \frac{p^3}{(p^2+1)(p-1)} \text{ for } p=3$$
            Therefore we have 
            \begin{align*}
              \beta_{X^{(p,1,1,1),p}}(h) &=\frac{1}{p}w_p(h)b_p(h,\lambda_{(p,1,1,1)},0)  
              \\ &\leq \frac{2}{p}.
            \end{align*}
            Other cases can be bound similarly.
        \end{proof}    
    \end{lemma}
    \begin{lemma}
        Let $g(\textbf{d}):=\prod_{p\mid \textbf{d}}\frac{\beta_{X^{\textbf{d}},p}(h)}{\prod_{j=1}^4\beta_{X^{(d_j,1,1,1)},p}(h)}, u_{i,j}:=\gcd(d_i,d_j).$
        Then we have $$g(\textbf{d})\ll \max_{i,j}(u_{i,j})^{12}.$$
        \begin{proof}
            By Lemma \ref{locdenprodbound}, we have $$
\beta_{X^{\textbf{d}},p}(h)\leq \begin{cases}
			(\frac{2}{p})^{\sum_{i=1}^4{\text{ord}_p(d_i)}}, & \text{if $p\nmid n$ }\\
            (\frac{1}{\sqrt{p}})^{\sum_{i=1}^4{\text{ord}_p(d_i)}}, & \text{if $p\mid n$}
		 \end{cases}
$$
and we have 
$$\beta_{X^{(p,1,1,1)},p}(h)\geq \frac{p-1}{2p^2-p+1}.$$
Hence we have
\begin{align*}
    g(\textbf{d})& = \prod_{p\mid u_{1,2}\cdot\cdot\cdot u_{3,4}}\frac{\beta_{X^{\textbf{d}},p}(h)}{\prod_{j=1}^4\beta_{X^{(p,1,1,1)},p}(h)}
    \\&\leq  \prod_{p\mid u_{1,2}\cdot\cdot\cdot u_{3,4}}(\frac{2p^2-p+1}{\sqrt{p}(p-1)})^{\text{ord}_p(d_1)+\text{ord}_p(d_2)+\text{ord}_p(d_3)+\text{ord}_p(d_4)}
    \\& \leq  \prod_{p\mid u_{1,2}\cdot\cdot\cdot u_{3,4}}(\frac{2p^2-p+1}{\sqrt{p}(p-1)})^{4}
    \\&\leq  \prod_{p\mid u_{1,2}\cdot\cdot\cdot u_{3,4}}(\frac{4^4(p-1)^8}{p^2(p-1)^4})
    \\&\leq  \prod_{p\mid u_{1,2}\cdot\cdot\cdot u_{3,4}}4^4p^2 
    \leq 4^4\prod_{i<j \atop 1\leq i,j \leq 4}u_{i,j}^2 \ll \max_{i,j}(u_{i,j})^{12}
\end{align*}
\end{proof}
    \end{lemma}

    \begin{lemma}
        Let $m$ be an odd integer such that $m \not\equiv 4 \pmod 3, m \not\equiv 4 \pmod 5$, $p\geq5$ be an odd prime, $h = 8(m-2)n + \sum_{j=1}^4\alpha_j(m-4)^2.$
        Then for $p\nmid (m-2)(m-4)$ and $p\mid \prod_{j=1}^4\alpha_j$ we have the following:
        \begin{equation*}
            \max\{\beta_{X^{(p,1,1,1)},p}(h),\beta_{X^{(1,p,1,1)},p}(h),\beta_{X^{(1,1,p,1)},p}(h),\beta_{X^{(1,1,1,p)},p}(h)\}\leq \frac{4}{p}
        \end{equation*}
        \begin{multline*}
            \max\{\beta_{X^{(p,p,1,1)},p}(h),\beta_{X^{(p,1,p,1)},p}(h),\beta_{X^{(p,1,1,p)},p}(h),\beta_{X^{(1,p,p,1)},p}(h)
            \\,\beta_{X^{(1,p,1,p)},p}(h),\beta_{X^{(1,1,p,p)},p}(h) \}\leq \frac{2p}{(p-1)^2(p+1)} \leq \frac{16}{p^2}
        \end{multline*}
        \begin{multline*}
            \max\{\beta_{X^{(p,p,p,1)},p}(h),\beta_{X^{(p,p,1,p)},p}(h),\beta_{X^{(p,1,p,p)},p}(h)
            \\,\beta_{X^{(1,p,p,p)},p}(h)\}\leq \frac{2}{(p-1)^2(p+1)} \leq \frac{64}{p^3}
        \end{multline*}
        \begin{equation*}
            \beta_{X^{(p,p,p,p)},p}(h)\leq
            \begin{cases}
			\frac{1}{(p-1)^2(p+1)}, & \text{for $n \in p\mathbb{Z}_p$ }\\
            \frac{256}{p^4}, & \text{for $n \not\in p\mathbb{Z}_p$ }
		 \end{cases}
        \end{equation*}
        \begin{proof}
            By Theorem \ref{Thm:localdenexplict}, the maximum value of $b_p(h,\lambda_{p^{\textbf{c}}},0)$ where $\textbf{c}\in \{(0,0,0,1),(0,0,1,0),(0,1,0,0),(1,0,0,0)\}$ is achieved when $\textbf{t}_d=1, t_1=t_2=0,t_3=1,$ where $N_p=\{1,2,3\},$ and $\textbf{t}_n\geq \textbf{t}_d.$ Then 
            $$b_p(h,\lambda_{p^{\textbf{c}}},0)=1+(1-\frac{1}{p})\delta_p(1)\leq 2-\frac{1}{p}.$$
            Therefore we have 
            \begin{multline*}
                \max\{\beta_{X^{(p,1,1,1)},p}(h),\beta_{X^{(1,p,1,1)},p}(h),\beta_{X^{(1,1,p,1)},p}(h),\beta_{X^{(1,1,1,p)},p}(h)\}
                \\\leq \frac{1}{p}\cdot(2-\frac{1}{p})\cdot\frac{p^3}{(p-1)^2(p+1)}\leq \frac{4}{p} \text{ for } p\geq 5.
            \end{multline*}
            For $\textbf{c}\in \{(0,0,1,1),(0,1,0,1),(1,0,0,1),(0,1,1,0),(1,0,1,0),(1,1,0,0)\},$
            the maximum value of $b_p(h,\lambda_{p^{\textbf{c}}},0)$
            is achieved when $\textbf{t}_d=1, t_1=0,t_2=1,$ where $N_p=\{1,2\},$ $t_3=t_4=1,$ where $D_p=\{3,4\},$ and $\textbf{t}_n< \textbf{t}_d.$
            Then 
            $$b_p(h,\lambda_{p^{\textbf{c}}},0)=1+(\frac{u_1}{p})(\frac{\textbf{u}_n}{p})\leq 2.$$
            Therefore we have 
            \begin{multline*}
            \max\{\beta_{X^{(p,p,1,1)},p}(h),\beta_{X^{(p,1,p,1)},p}(h),\beta_{X^{(p,1,1,p)},p}(h),
            \\\beta_{X^{(1,p,p,1)},p}(h)
            ,\beta_{X^{(1,p,1,p)},p}(h),\beta_{X^{(1,1,p,p)},p}(h) \}
            \\\leq \frac{1}{p^2}\cdot2\cdot\frac{p^3}{(p-1)^2(p+1)}\leq \frac{16}{p^2}.
        \end{multline*}
        For $\textbf{c}\in \{(0,1,1,1),(1,0,1,1),(1,1,0,1),(1,1,1,0)\},$
        the maximum value of $b_p(h,\lambda_{p^{\textbf{c}}},0)$
            is achieved when $\textbf{t}_d=1, t_1=0$ where $N_p=\{1\},$ and $\textbf{t}_n< \textbf{t}_d.$
             Then 
            $$b_p(h,\lambda_{p^{\textbf{c}}},0)=1+(\frac{u_1}{p})(\frac{\textbf{u}_n}{p})\leq 2.$$
            Therefore we have 
            \begin{multline*}
            \max\{\beta_{X^{(p,p,p,1)},p}(h),\beta_{X^{(p,p,1,p)},p}(h),\beta_{X^{(p,1,p,p)},p}(h)
            \\,\beta_{X^{(1,p,p,p)},p}(h)\}\leq \frac{1}{p^3}\cdot2\cdot\frac{p^3}{(p-1)^2(p+1)}\leq \frac{64}{p^3}
            \end{multline*}
            And finally for $\textbf{c}=(1,1,1,1),$ we have $$b_p(h,\lambda_{p^{\textbf{c}}},0)=
            \begin{cases}
                p, & \text{for $n\in p\mathbb{Z}_p$}\\
                0, & \text{for $n\not\in p\mathbb{Z}_p$}
            \end{cases}
            $$
            Therefore we have 
            \begin{equation*}
            \beta_{X^{(p,p,p,p)},p}(h)\leq
            \begin{cases}
			\frac{1}{(p-1)^2(p+1)}, & \text{for $n \in p\mathbb{Z}_p$ }\\
            \frac{256}{p^4}, & \text{for $n \not\in p\mathbb{Z}_p$ }
		 \end{cases}
        \end{equation*}
        \end{proof}
    \end{lemma}

\section{Error from Cusp form}
    We will use the formula from \cite[Lemma 3.2]{Kamaraj}. We provide the statement for the case $l=4.$
    \begin{lemma}\label{cuspexpl}
    Let $M=2(m-2), N_{\boldsymbol{\alpha}}$ be the level of $f_{\boldsymbol{\alpha}}(\textbf{x}).$ Then for any $\delta,\epsilon>0,$ there exists constants $c_{\delta},C_{\epsilon}>0$ such that 
        \begin{multline*}
            \mid a_{G_{X}}(n)\mid\leq \frac{54}{\pi^2\delta_{M^2N_{\boldsymbol{\alpha}}}^\frac{3}{2}}\frac{M^2N_{\boldsymbol{\alpha}}^{2+2\delta}\sqrt{\frac{2\pi}{3}}e^{2\pi}\zeta(1+4\delta)^{\frac{1}{2}}c_{\delta}^{\frac{5}{2}}\varphi(M)}{\prod_{p\mid M,p\nmid N_{\boldsymbol{\alpha}}}(1-p^{-2})^{\frac{1}{2}}\prod_{p\mid N_{\boldsymbol{\alpha}}}(1-p^{-1})^{\frac{1}{2}}}C_{\epsilon}n^{\frac{1}{2}+\epsilon}
            \\ \times(\sum_{d\mid M^2N_{\boldsymbol{\alpha}}}\varphi(\frac{M^2N_{\boldsymbol{\alpha}}}{d})\varphi (d)\frac{M^2N_{\boldsymbol{\alpha}}}{d}(\frac{\gcd(M^2,d)}{M^2})^4)^{\frac{1}{2}}(\frac{27}{\Delta_{\boldsymbol{\alpha}}}\frac{M^2N_{\boldsymbol{\alpha}}}{\pi\delta_{M^2N_{\boldsymbol{\alpha}}}}+16)^{\frac{1}{2}}.
        \end{multline*}
    \end{lemma}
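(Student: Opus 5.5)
The lemma is the case $l=4$ of \cite[Lemma 3.2]{Kamaraj}, so the proof amounts to checking that our coset theta function satisfies that lemma's hypotheses and then evaluating its general constants with $l=4$. The underlying strategy is the standard one for cusp-form coefficients. First realise $G_X$ as a cusp form of weight $2$ on $\Gamma_1(M^2N_{\boldsymbol{\alpha}})$. Expand it in an orthogonal basis of Hecke eigenforms (newforms and their oldform lifts). Bound each eigenform's coefficients by Deligne's bound, and bound the Petersson norm of $G_X$ in terms of the level.

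\emph{Modularity and level.} By \cite[Proposition 2.1]{Banerjee}, $\Theta_X$ for $X=L+v$ is a modular form of weight $2$. Here $v$ has conductor $M=2(m-2)$ and $L$ is the diagonal lattice attached to $f_{\boldsymbol{\alpha}}$, so $\Theta_X$ lies on $\Gamma_1(M^2N_{\boldsymbol{\alpha}})$. Hence $G_X=\Theta_X-E_X$ lies in $S_2(\Gamma_1(M^2N_{\boldsymbol{\alpha}}))$, which we decompose over nebentypus characters $\chi$ modulo $M^2N_{\boldsymbol{\alpha}}$.

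\emph{Basis expansion.} In each $S_2(\Gamma_0(N),\chi)$ with $N=M^2N_{\boldsymbol{\alpha}}$, write $G_X=\sum_f c_f f$ over an orthogonal basis built from newforms $g$ of level $d\mid N$ and their shifts $g(\ell z)$. Deligne gives $|a_g(n)|\le d(n)\sqrt n\ll_\epsilon n^{1/2+\epsilon}$; this is where $C_\epsilon n^{1/2+\epsilon}$ comes from. Cauchy--Schwarz gives
$$|a_{G_X}(n)|\le \langle G_X,G_X\rangle^{1/2}\Bigl(\sum_f \frac{|a_f(n)|^2}{\langle f,f\rangle}\Bigr)^{1/2}.$$
The lower bounds for $\langle g,g\rangle$ (via $L(1,\mathrm{Sym}^2 g)$, Hoffstein--Lockhart type estimates) account for the factors $c_\delta$, $N_{\boldsymbol\alpha}^{2\delta}$ and $\zeta(1+4\delta)^{1/2}$. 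Counting the basis elements over $d\mid N$ and over characters produces $\sum_{d}\varphi(N/d)\varphi(d)\frac{N}{d}(\gcd(M^2,d)/M^2)^4$, the exponent $4$ being the weight doubled. The Euler products $\prod(1-p^{-2})^{-1/2}$ and $\prod(1-p^{-1})^{-1/2}$ come from the index of $\Gamma_0(N)$ in $\mathrm{SL}_2(\mathbb Z)$.

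\emph{Norm of $G_X$.} Bound $\langle G_X,G_X\rangle$ by unfolding against a truncated fundamental domain: cut at height $\asymp\delta_{N}$ and use the elementary bound on $\sum_{x\in X}e^{-4\pi Q(x)y}$ in terms of the minimum $\Delta_{\boldsymbol\alpha}$ of the form. The term $\frac{27}{\Delta_{\boldsymbol\alpha}}\frac{N}{\pi\delta_N}+16$ arises from this lattice-point count, and $e^{2\pi}\sqrt{2\pi/3}$ from the height cutoff.

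\emph{Main obstacle.} Each of these steps is done in \cite{Kamaraj} for general $l$, so the real work is verifying two hypotheses: that our $X^{\mathbf d}$, with the odd squarefree $\boldsymbol{\alpha}$ and shift vector $v$, fits that paper's shape (a coset of a diagonal lattice with conductor $M$), and that the specialisation $l=4$ reproduces the constants exactly as displayed.
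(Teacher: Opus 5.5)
This is the same route as the paper, which gives no argument for this lemma beyond quoting the case $l=4$ of \cite[Lemma 3.2]{Kamaraj}; your sketch of the cited proof's internals (Petersson-norm bound, Deligne's bound, counting the eigenbasis) is background that the paper does not reproduce and that has not been checked against the source. The lemma as stated concerns only the coset $X$ attached to $f_{\boldsymbol{\alpha}}$ itself, so the hypothesis check you flag for the cosets $X^{\mathbf{d}}$ belongs to the paper's later applications of the lemma, not to this statement.
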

    We next bound the cuspidal contribution.
    \begin{lemma}
    Using Lemma \ref{cuspexpl}, we have
        $$\mid a_{G_{X}}(n)\mid\ll M^{\frac{11}{2}+\epsilon}N_{\boldsymbol{\alpha}}^{\frac{5}{2}+\epsilon}n^{\frac{1}{2}+\epsilon}.$$
        \begin{proof}
            Note that we have 
            \begin{align*}
            \sum_{d\mid M^2N_{\boldsymbol{\alpha}}}\varphi(\frac{M^2N_{\boldsymbol{\alpha}}}{d})\varphi (d)\frac{M^2N_{\boldsymbol{\alpha}}}{d}(\frac{\gcd(M^2,d)}{M^2})^4 &\leq \sum_{d\mid M^2N_{\boldsymbol{\alpha}}}\varphi(\frac{M^2N_{\boldsymbol{\alpha}}}{d})\varphi (d)\frac{M^2N_{\boldsymbol{\alpha}}}{d}
            \\& \leq M^2N_{\boldsymbol{\alpha}}\varphi(M^2N_{\boldsymbol{\alpha}})\sigma_{-1}(M^2N_{\boldsymbol{\alpha}})\ll_\epsilon M^{4+\epsilon}
            \end{align*}
            Therefore we have
            \begin{align*}
                \mid a_{G_{X}}(n)\mid &\ll M^{\frac{5}{2}}N_{\boldsymbol{\alpha}}^{2+\epsilon}\log(\log(N_{\boldsymbol{\alpha}}))\log(\log(M))\cdot M^{2+\epsilon}\cdot MN_{\tilde{a}}^{\frac{1}{2}}n^{\frac{1}{2}+\epsilon}
                \\& \ll M^{\frac{11}{2}+\epsilon}N_{\boldsymbol{\alpha}}^{\frac{5}{2}+\epsilon}n^{\frac{1}{2}+\epsilon}
            \end{align*}
        \end{proof}
    \end{lemma}

\section{Main term and error term from sieving}
    Define $A_{\boldsymbol{\ell}}$ be the set of solutions ${\textbf{x}}\in\mathbb{Z}^4$ with $\ell_j\mid x_j$ to 
    $$\sum_{j=1}^4\alpha_jp_m(x_j)=n.$$
    Let $P$ be a finite set of odd primes and for $\boldsymbol{\ell}\in\mathbb{N}^3$ and $n\in\mathbb{N},$ let $\mathcal{S}_h({A_{\boldsymbol{\ell}}},P)$ denote the size of the set
    \begin{equation}
        \mathcal{F}_h({A_{\boldsymbol{\ell}}},P)=\#\{\textbf{x}\in\mathbb{Z}^4:\sum_{j=1}^4\alpha_jp_m(\ell_jx_j)=n, p\mid x_j\implies p\not\in P\}.  
    \end{equation}
    We then define 
    \begin{equation}
        \mathcal{S}_{{\textbf{c}},h}({A_{\boldsymbol{\ell}}},P):= \sum_{p\mid {2\prod_{j=1}^4\alpha_j}}\sum_{\textbf{b}(p)\in\{0,1\}^4}\prod_{i=1}^4\mu(p^{b(p)_i})\mathcal{S}_h(A_{{\textbf{t}}\cdot\boldsymbol{\ell}},P),
    \end{equation}
    where ${\textbf{t}}=\prod_{p\mid2\prod_{j=1}^4\alpha_j}p^{c_p\cdot{\textbf{b}(p)}}.$
    \\Which is the size of the set 
    \begin{multline*}
    \mathcal{F}_{c,h}({A_{\boldsymbol{\ell}}},P)=\{\textbf{x}\in\mathbb{Z}^4:\sum_{j=1}^4\alpha_jp_m(\ell_jx_j)=n, 
    \\p\mid x_j\implies p\not\in P\cup\{p: p\mid 2\prod_{j=1}^4\alpha_j\} \\\mbox{ or } (p\mid 2\prod_{j=1}^4\alpha_j \mbox{ and } \text{ord}_p(x_j)< c_p)\}.       
    \end{multline*}
    Define $P=P_{z,\Delta_{\alpha}}:=\{p\leq z\}\cap \{p\nmid 2\prod_{j=1}^4\alpha_j\}.$
    \\For $z_0\geq 3,$ define $P(z_0):=\prod_{p<z_0\atop p\nmid 2\prod\alpha_j}p$
    \\ 
    For $\beta, D>0$ and the integer $d$ of the form $d=p_1\cdot\cdot\cdot p_r$ with $p_1 > \cdot\cdot\cdot > p_r$ with $p_j$ an odd prime.
        The Rosser weights $\lambda_d^{\pm}$ are defined as follows:
        \\Let $$y_m=y_m(D,\beta):=(\frac{D}{p_1\cdot\cdot\cdot p_m})^{\frac{1}{\beta}},$$
        then
        \\$\lambda_d^+=\lambda_{d,D}^+(\beta):=
            \begin{cases}
                (-1)^r \quad \text{if $p_{2l+1}<y_{2l+1}(D,\beta) \quad\forall0\leq l\leq \frac{r-1}{2}$},
                \\0 \quad\qquad\text{otherwise.}
            \end{cases}$
            \\$\lambda_d^-=\lambda_{d,D}^-(\beta):=
                \begin{cases}
                (-1)^r \quad \text{if $p_{2l}<y_{2l}(D,\beta) \quad\forall0\leq l\leq \frac{r}{2}$},
                \\0 \quad\qquad\text{otherwise.}
            \end{cases}$
    \\Furthermore define 
    $$\Lambda_d^-:=4\lambda_d^--3\lambda_d^+.$$
    By \cite{Brudern}, we have the following inequalities
    \begin{equation}
        \sum_{d\mid c}\lambda_d^-\leq\sum_{d\mid c}\mu(d)\leq\sum_{d\mid c}\lambda_d^+
    \end{equation}
    and 
    \begin{equation}
        \prod_{j=1}^4\sum_{d_j\mid c_j}\mu(d_j)
        \geq\sum_{k=1}^4\sum_{d_k\mid c_k}\lambda_{d_k}^-\prod_{1\leq 4 \atop{j\neq k}}\sum_{d_j\mid c_j}\lambda_{d_j}^+-
        3\prod_{j=1}^4\sum_{d_j\mid c_j}\lambda_{d_j}^+.
    \end{equation}
    Define $W_{\textbf{c},h}(z_0):=\prod_{p\mid 2\prod_{j=1}^4\alpha_j}C_p\prod_{p\in P_{z_0,\Delta_{\alpha}}}(1-\beta_{X^{(p,1,1,1)},p}(h))$ and 
    $$H(n)=\prod_{p\mid n}(1+p^{-\frac{1}{2}}).$$
    where $C_p$ are the ratio of local densities at $p.$
    For $\epsilon\in \{\pm1\},$ we set 
    $$M_h^\epsilon(z_0)=
        \begin{cases}
        \sum_{d_1\mid P_{z_0,\Delta_\alpha}}\Lambda_{d_1}^{-}\sum_{d_2\mid P_{z_0,\Delta_\alpha}}\lambda_{d_2}^{+}\sum_{d_3\mid P_{z_0,\Delta_\alpha}}\lambda_{d_3}^{+}
        \\\qquad\qquad\qquad\qquad\qquad\cdot\sum_{d_4\mid P_{z_0,\Delta_\alpha}}\lambda_{d_4}^{+}\prod_{p\mid \textbf{d}}\beta_{X^{\textbf{d}},p}(h) & \text{if $\epsilon=-1$},    
        \\\sum_{d_1\mid P_{z_0,\Delta_\alpha}}\lambda_{d_1}^{+}\sum_{d_2\mid P_{z_0,\Delta_\alpha}}\lambda_{d_2}^{+}\sum_{d_3\mid P_{z_0,\Delta_\alpha}}\lambda_{d_3}^{+}
        \\\qquad\qquad\qquad\qquad\qquad\cdot\sum_{d_4\mid P_{z_0,\Delta_\alpha}}\lambda_{d_4}^{+}\prod_{p\mid \textbf{d}}\beta_{X^{\textbf{d}},p}(h) & \text{if $\epsilon=1$},       
        \end{cases}
    $$
    \begin{proposition}
        Let $D_0$ and $z_0$ be given, and write $s_0:=\frac{\log(D_0)}{\log(z_0)}.$ Suppose that all prime factors of $\boldsymbol{\ell}$ are greater than $z_0$ and divide $\prod_{j=1}^4\alpha_j.$ Then for some $B,C>0$
        \begin{align*}
            S_{\textbf{c},h}(A_{\boldsymbol{\ell}},P_{z_0,\Delta_\alpha}))=(W_{c,h}(z_0)+ & O_{\epsilon}(\Delta^{-\frac{1}{2}}H(n)^5\log(z_0)^8
            \\&+\Delta^{\epsilon-B}\log(z_0)^8+\Delta^Ce^{-s_0})r_{gen^+(X^{\boldsymbol{\ell}})}(h)
            \\&+O(\sum_{{\textbf{d}\in S_{\prod_{p\mid 2\prod_{j=1}^4\alpha_j}p^{c_p}}^4\atop{p\mid d_j\implies p\in P_{\delta,\Delta_{\alpha}}}}}\mid r_{X^{{\textbf{d}}\cdot{\boldsymbol{\ell}}}}(h)-r_{gen^+(X^{{\textbf{d}}\cdot{\boldsymbol{\ell}}})}(h)\mid).           
        \end{align*}
    \end{proposition}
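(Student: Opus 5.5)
We follow the standard semi-linear sieve setup used by Blomer--Brüdern and Banerjee--Kane. First, unfold $S_{\textbf{c},h}(A_{\boldsymbol{\ell}},P_{z_0,\Delta_\alpha})$ by Möbius inversion in each coordinate: the condition ``$p\mid x_j\Rightarrow p\notin P$'' is $\sum_{d_j\mid (x_j,P(z_0))}\mu(d_j)$. We then replace the product $\prod_{j=1}^4\sum_{d_j}\mu(d_j)$ by the Rosser-weight combinations of the displayed inequalities, giving the upper bound with $\lambda^+$ in all four slots and the lower bound with $\Lambda^-\lambda^+\lambda^+\lambda^+$. The weights are supported on $d_j\le D_0$. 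Interchanging sums, each term becomes $r_{X^{\textbf{t}\textbf{d}\boldsymbol{\ell}}}(h)$, since counting $\textbf{x}$ with $d_j\ell_j t_j\mid x_j$ is exactly counting representations of $h$ by the coset $X^{\textbf{t}\textbf{d}\boldsymbol{\ell}}$. The inclusion--exclusion over $p\mid 2\prod\alpha_j$ with the $\mu(p^{b(p)_i})$ factors handles the $c_p$ conditions in the same way.

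Next, split each $r_{X^{\textbf{d}\boldsymbol{\ell}}}(h)=r_{gen^+(X^{\textbf{d}\boldsymbol{\ell}})}(h)+\bigl(r_{X^{\textbf{d}\boldsymbol{\ell}}}(h)-r_{gen^+(X^{\textbf{d}\boldsymbol{\ell}})}(h)\bigr)$. The second pieces, summed over $\textbf{d}$, give exactly the final $O(\cdot)$ term; their weights are bounded by $1$ in absolute value (or $7$ for $\Lambda^-$). For the genus piece we use $r_{gen^+}=a_E$ and the ratio formula (\ref{eqn:eisenratio}): $r_{gen^+(X^{\textbf{d}\boldsymbol{\ell}})}(h)=r_{gen^+(X^{\boldsymbol{\ell}})}(h)\prod_{p\mid\textbf{d}}\beta_{X^{\textbf{d}},p}(h)$. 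Here coprimality of $\textbf{d}$ and $\boldsymbol{\ell}$ holds because the primes of $\boldsymbol{\ell}$ exceed $z_0$. The main term is therefore $r_{gen^+(X^{\boldsymbol{\ell}})}(h)$ times $\prod_p C_p$ times $M_h^{\epsilon}(z_0)$.

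It remains to show $M_h^{\pm}(z_0)=W_{\textbf{c},h}(z_0)\bigl(1+O(\cdots)\bigr)$. We factor $\beta_{X^{\textbf{d}}}=g(\textbf{d})\prod_j\beta_{X^{(d_j,1,1,1)}}$. When the $d_j$ are pairwise coprime we have $g=1$, and the sum factors into four one-dimensional Rosser sums. Each has dimension $\kappa\le 2$ since $\beta_{X^{(p,1,1,1)},p}\le 2/p$. The fundamental lemma of the Rosser sieve (Brüdern, Iwaniec) gives each as $\prod_{p}(1-\beta_{X^{(p,1,1,1)},p})\bigl(1+O(e^{-s_0})\bigr)$, up to the constants absorbed in $\Delta^C$. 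The product recombines to $W$, using that $4\lambda^--3\lambda^+$ yields the main term with coefficient $4-3=1$. Non-coprime tuples are handled by the bound $g(\textbf{d})\ll\max u_{i,j}^{12}$. A tuple with $u_{i,j}>1$ contributes at most $\sum_{u>1}\sum_{p\mid u}\cdots$. Since $\prod_{p\mid u}\beta\le\prod_{p\mid u}\tilde w(p)^2/p^2$ and $\tilde w(p)\ll p^{1/2}$ only when $p\mid n$, this sum is $\ll \Delta^{-1/2}H(n)^5\log(z_0)^8$. The $\log^8$ comes from the Mertens-type bounds on the four remaining free sums, and $H(n)^5$ from primes dividing $n$. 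The primes $p\mid\Delta$, excluded from $P$ but still weighted, together with primes below $\Delta$, give the $\Delta^{\epsilon-B}$ term.

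The main obstacle is this third step: controlling the overlap terms where $\gcd(d_i,d_j)>1$ while keeping the weights bounded. We would first try writing $\textbf{d}$ with the common part $u$ extracted and summing trivially over $u$, using $\beta_{X^{(p,p,\ast,\ast)}}\le 16/p^2$ from the lemmas above. If the resulting prime sums $\sum_p \tilde w(p)^2/p^2$ fail to converge for $p\mid n$, we would isolate those primes into $H(n)$.
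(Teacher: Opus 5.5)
Your first two steps (Möbius unfolding, passing to Rosser weights, splitting off $r_{X^{\textbf{d}\boldsymbol\ell}}(h)-r_{gen^+(X^{\textbf{d}\boldsymbol\ell})}(h)$, and using the ratio formula) are exactly Proposition 5.2 and are fine. The gap is in your third step, in two places.

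First, restricting to pairwise coprime $\textbf{d}$ does not make the sum factor into four one-dimensional Rosser sums, because the coprimality condition itself couples $d_1,\dots,d_4$.

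Second, and decisively, tuples with some $u_{i,j}=\gcd(d_i,d_j)>1$ are not an error term. Fix a small prime $p\in P_{z_0,\Delta_\alpha}$. The tuples with $p\mid\gcd(d_1,d_2)$ carry the factor $\beta_{X^{(p,p,1,1)},p}(h)$. This is a fixed positive quantity, generically of order $p^{-2}$, and it does not shrink as $z_0$ or $\Delta$ grows. It is part of the local factor of the true main term, $\sum_{\textbf{b}\in\{0,1\}^4}(-1)^{b_1+\cdots+b_4}\beta_{X^{p^{\textbf{b}}},p}(h)$, which involves the joint densities. Summing trivially over $u$ means putting absolute values on the Rosser weights, which loses the sieve cancellation. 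These tuples are then bounded only by roughly $p^{-2}(\log z_0)^{8}$, which does not tend to $0$ and dwarfs $W_{\textbf{c},h}(z_0)$. So no bound of the form $\Delta^{-1/2}H(n)^5\log(z_0)^8$ can hold for all $u>1$. In your argument $\Delta$ never actually enters as a cutoff, and the remark about ``primes $p\mid\Delta$'' confuses the real parameter $\Delta$ with $\Delta_\alpha$.

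The missing mechanism is the paper's, in four steps.

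\emph{Large overlaps.} Only tuples with some $\gcd(d_i,d_j)>\Delta$ are discarded, by Rankin's trick: $\sum_{\epsilon\ge\Delta}\mu^2(\epsilon)\tilde w(\epsilon)^2\epsilon^{-2}\le\Delta^{-1/2}\prod_p(1+\tilde w(p)^2p^{-3/2})$, together with $\prod_{p\le z_0}(1+\tilde w(p)/p)^4\ll H(n)^4\log(z_0)^8$. This is where $\Delta^{-1/2}H(n)^5\log(z_0)^8$ comes from.

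\emph{Small overlaps.} The remaining tuples are grouped by the values $u_{i,j}\le\Delta$. Since $g(\textbf{d})$ depends only on the $u_{i,j}$, it is pulled out. The exact conditions $\gcd(d_i,d_j)=u_{i,j}$ are removed by Möbius inversion in new variables $l_{i,j}$. Each coordinate then becomes a genuinely one-dimensional Rosser sum with a divisibility constraint $\mathrm{lcm}_{j\ne i}(u_{i,j}l_{i,j})\mid d_i$.

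\emph{Tail in $l$.} The part where some $l_{i,j}>\Delta^B$ is bounded trivially and gives $\Delta^{\epsilon-B}\log(z_0)^8$. It has nothing to do with primes dividing $\Delta$.

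\emph{Fundamental lemma.} Only now is the fundamental lemma ([2, Lemma 11]) applied, to each constrained one-dimensional sum. Its relative error $e^{-s_0}$, multiplied by $g\ll\max u_{i,j}^{12}\le\Delta^{12}$ and by the number of $(u,l)$ configurations, is the $\Delta^{C}e^{-s_0}$ term; it is not an absorbed constant. The main terms, summed over all $(u_{i,j},l_{i,j})$, reassemble into the Euler product.

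Your closing instinct that the overlap terms are the crux is right. But extracting the common part and summing trivially is exactly what cannot work for small overlaps.
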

    Before proving Proposition 5.1, we require certain inequalities for the upper bound and the lower bound of $S_{\textbf{c},h}(A_{\ell},P_{z_0,\Delta_\alpha}).$
    \begin{proposition}
        Suppose that all prime divisors of $\boldsymbol{\ell}$ are greater than $z_0$ and do not divide $\prod_{j=1}^4\alpha_j.$
        \\(1) We have
            \begin{multline*}
                S_{\textbf{c},h}(A_{\ell},P_{z_0,\Delta_\alpha})\geq
                \prod_{p\mid 2\prod_{j=1}^4\alpha_j}C_pM_h^-(n^{\delta})r_{gen^+(X^{\boldsymbol{\ell}})}(h)
                \\-7\sum_{{\textbf{d}\in S_{\prod_{p\mid 2\prod_{j=1}^4\alpha_j}p^{c_p}}^4\atop{p\mid d_j\implies p\in P_{\delta,\Delta_{\alpha}}}}}\mid (r_{X^{\textbf{d}\cdot\boldsymbol{\ell}}}
                (h)-r_{gen^+(X^{\textbf{d}\cdot\boldsymbol{\ell}})}(h))\mid .
                %\atop{d_j\leq D_0}
            \end{multline*}
        \\(2) We have 
            \begin{multline*}
                S_{\textbf{c},h}(A_{\ell},P_{z_0,\Delta_\alpha})\leq
                \prod_{p\mid 2\prod_{j=1}^4\alpha_j}C_pM_h^+(n^{\delta})r_{gen^+(X^{\boldsymbol{\ell}})}(h)
                \\+7\sum_{{\textbf{d}\in S_{\prod_{p\mid 2\prod_{j=1}^4\alpha_j}p^{c_p}}^4\atop{p\mid d_j\implies p\in P_{\delta,\Delta_{\alpha}}}}}\mid (r_{X^{\textbf{d}\cdot\boldsymbol{\ell}}}
                (h)-r_{gen^+(X^{\textbf{d}\cdot\boldsymbol{\ell}})}(h))\mid .
                %\atop{d_j\leq D_0}
            \end{multline*}
    \end{proposition}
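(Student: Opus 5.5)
We follow the standard Rosser-sieve route used in the references for sums of three or four squares over almost primes. The plan has four steps.

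First, write $S_{\textbf{c},h}(A_{\boldsymbol{\ell}},P_{z_0,\Delta_\alpha})$ as a sum over solutions $\textbf{x}$ of
\[
\prod_{j=1}^4\sum_{d_j\mid \gcd(x_j,P(z_0))}\mu(d_j),
\]
together with the inclusion--exclusion at primes $p\mid 2\prod_j\alpha_j$ encoded by the signs $\mu(p^{b(p)_i})$ and the shifts $\textbf{t}$. For part (1), apply the four-fold inequality quoted from \cite{Brudern}:
\[
\prod_{j}\sum_{d_j\mid c_j}\mu(d_j)\geq\sum_k\sum_{d_k\mid c_k}\lambda^-_{d_k}\prod_{j\neq k}\sum_{d_j\mid c_j}\lambda^+_{d_j}-3\prod_j\sum_{d_j\mid c_j}\lambda^+_{d_j}.
\]
By the symmetry of the situation (or by replacing the sum over $k$ with the combination $\Lambda^-=4\lambda^--3\lambda^+$ placed in a single coordinate, as in the definition of $M_h^-$), this gives the lower bound with weights $\Lambda^-_{d_1}\lambda^+_{d_2}\lambda^+_{d_3}\lambda^+_{d_4}$. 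For part (2), use only $\sum_{d\mid c}\mu(d)\le\sum_{d\mid c}\lambda_d^+$ in each coordinate. This step needs the fact that each sum $\sum_{d\mid c}\lambda^+_d$ is nonnegative, which holds for Rosser weights.

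Second, interchange the order of summation. After the interchange, each term counts solutions with $d_j t_j\ell_j\mid x_j$, which is exactly $r_{X^{\textbf{d}\textbf{t}\boldsymbol{\ell}}}(h)$. We then split
\[
r_{X^{\textbf{d}\cdot\boldsymbol{\ell}}}(h)=r_{gen^+(X^{\textbf{d}\cdot\boldsymbol{\ell}})}(h)+\bigl(r_{X^{\textbf{d}\cdot\boldsymbol{\ell}}}(h)-r_{gen^+(X^{\textbf{d}\cdot\boldsymbol{\ell}})}(h)\bigr),
\]
where the genus term is the Eisenstein coefficient $a_E$.

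Third, treat the main term. By the ratio formula (\ref{eqn:eisenratio}) and its product form, we have
\[
r_{gen^+(X^{\textbf{d}\boldsymbol{\ell}})}(h)=r_{gen^+(X^{\boldsymbol{\ell}})}(h)\prod_{p\mid\textbf{d}}\beta_{X^{\textbf{d}},p}(h).
\]
This requires $\gcd(\textbf{d},\boldsymbol{\ell})=1$, which holds because the primes of $\boldsymbol{\ell}$ exceed $z_0$ while those of $\textbf{d}$ are below $z_0$. The factors coming from the primes $p\mid2\prod_j\alpha_j$, summed over $\textbf{b}(p)$, collapse into the local ratios $C_p$, since these primes are coprime to $P(z_0)$ and the local factors multiply. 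The remaining sum is exactly $M_h^{\mp}(n^\delta)$.

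Fourth, bound the error terms. Rosser weights satisfy $|\lambda_d^\pm|\le1$. The coefficient in front of the discrepancy terms is therefore at most $|\Lambda^-|\le 4+3=7$ in case (1), and at most $1\le7$ in case (2). Moreover every $\textbf{d}$ with nonzero weight has prime factors only in $P_{\delta,\Delta_\alpha}$ and has the indicated form in $S^4$. Taking absolute values gives the error sum with constant $7$.

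The main obstacle is the bookkeeping at primes dividing $2\prod_j\alpha_j$. There we must check that the signed combination over $\textbf{b}(p)$ produces the multiplicative constant $C_p$ independently of $\textbf{d}$. This needs the remark that $b_p(h,\lambda_{\textbf{d}},0)$ depends only on $p^{\mathrm{ord}_p\textbf{d}}$. We also need the positivity of the genus main term, so that the sieve inequalities may be multiplied through by it.
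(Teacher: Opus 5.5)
Your outline follows the paper's proof step for step: a pointwise sieve inequality, interchange of summation to reach $r_{X^{\textbf{t}\textbf{d}\boldsymbol{\ell}}}(h)$, splitting off the genus term, collapsing via the Eisenstein ratio into $\prod_p C_p$ and $M_h^{\mp}$, and bounding the coefficients by $7$. Part (2) is fine as you wrote it. The gap is in part (1), exactly at the step you hedge on, and the paper's proof makes the same unjustified move when it writes the $\Lambda^-_{d_1}\lambda^+_{d_2}\lambda^+_{d_3}\lambda^+_{d_4}$ bound ``using (5.4)''. There is no symmetry to appeal to: the $\alpha_j$ (and the $\ell_j$) differ between coordinates. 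So neither the solution set nor the local factors $\beta_{X^{\textbf{d}},p}(h)$, which involve Legendre symbols of the $\alpha_i$, are invariant under permuting coordinates. The single-coordinate inequality is also false pointwise. Write $A_j=\sum_{d\mid c_j}\mu(d)$ and $L_j^{\pm}=\sum_{d\mid c_j}\lambda^{\pm}_d$, and take $c_1=c_3=c_4=1$. Then $\sum_{d\mid c_1}\Lambda^-_d=1$, and the claimed inequality reads $A_2\ge L_2^+$. But truncation of the Rosser weights produces $c_2>1$ with $L_2^+>0=A_2$. For example, $c_2=p_1p_2p_3$ with $p_1>p_2>p_3$ and $p_1^{\beta+1}<D\le p_1p_2p_3^{\beta+1}$ has $\lambda^+_{c_2}=0$, hence $L_2^+=1-3+3=1$. (The Brüdern--Fouvry right-hand side gives $L_2^-\le 0$ here, as it should.) Trading $\sum_k(\cdots)_k$ for $4(\cdots)_1$ is legitimate only after summing over a configuration that is symmetric in the coordinates, e.g.\ equal-weight sums of squares; that is not the case here.

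The repair is to keep the sum over $k$. Then (1) holds with main term $\prod_p C_p\bigl(\sum_{k=1}^4M_h^{(k)}-3M_h^+\bigr)r_{gen^+(X^{\boldsymbol{\ell}})}(h)$, where $M_h^{(k)}$ carries $\lambda^-$ in the $k$-th coordinate and $\lambda^+$ in the others. The discrepancy coefficients are still at most $4+3=7$, and each $M_h^{(k)}$ is treated in Proposition 5.1 exactly like $M_h^{\pm}$. So nothing downstream is lost, but the statement of (1) has to be amended.

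Two smaller points. First, the signed sum over $\textbf{b}(p)$ at $p\mid 2\prod_j\alpha_j$ must be collapsed to the nonnegative indicator $\prod_{p,i}(1-\mathbf{1}_{p^{c_p}\mid x_i})$ before the sieve inequality is applied; it cannot be applied inside terms carrying the sign $-1$. Second, no positivity of the genus term is needed, since the passage to $M_h^{\mp}r_{gen^+(X^{\boldsymbol{\ell}})}(h)$ is the exact identity $a_{E_{X^{\textbf{d}\boldsymbol{\ell}}}}(h)=a_{E_{X^{\boldsymbol{\ell}}}}(h)\prod_p\beta_{X^{\textbf{d}},p}(h)$.
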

    \begin{proof}
        (1) Suppose $P_1$ and $P_2$ are two disjoint sets of primes with $p\nmid l_j$ for all $p\in P_2$ and $1\leq j\leq 4.$ Writing $R=\prod_{p\in P_2}p,$ we compute
        \begin{align*}
            \mathcal{S}_{\textbf{c},h}(A_\ell,P_1\cup P_2) &=\sum_{x\in F_{\textbf{c},h}(A_\ell,P_1)}\prod_{j=1}^4\sum_{d_j\mid \gcd(x_j,R)}\mu(d_j)  
            \\& =\sum_{x\in F_{\textbf{c},h}(A_\ell,P_1)}\sum_{{d_1\mid R \atop{d_1\mid x_1}}}\mu(d_1) \sum_{{d_2\mid R \atop{d_2\mid x_2}}}\mu(d_2)\sum_{{d_3\mid R \atop{d_3\mid x_3}}}\mu(d_3)\sum_{{d_4\mid R \atop{d_4\mid x_4}}}\mu(d_4)
            \\& \geq \sum_{x\in F_{\textbf{c},h}(A_\ell,P_1)}\sum_{{d_1\mid R \atop{d_1\mid x_1}}}\Lambda^-_{d_1} \sum_{{d_2\mid R \atop{d_2\mid x_2}}}\lambda^+_{d_2}\sum_{{d_3\mid R \atop{d_3\mid x_3}}}\lambda^+_{d_3}\sum_{{d_4\mid R \atop{d_4\mid x_4}}}\lambda^+_{d_4}
            \\&= \sum_{d_1\mid R}\Lambda^-_{d_1}\sum_{d_2\mid R}\lambda^+_{d_2}\sum_{d_3\mid R}\lambda^+_{d_3}\sum_{d_4\mid R}\lambda^+_{d_4}\mathcal{S}_{{\textbf{c}},h}(A_{\textbf{d}\cdot\boldsymbol{\ell}},P_1),
        \end{align*}
        using (5.4) in the second last step.
        We assume that $p\mid \ell_j\implies p>z_0$ and $p\mid \prod_{j=1}^4\alpha_j$ and take $P_1=\emptyset, P_2=P_{z_0,\Delta_\alpha}:=\{p\leq z_0,p\nmid 2\prod_{j=1}^4\alpha_j\},$ then by (5.2) we have
        $$S_{\textbf{c},h}(A_{{\textbf{d}\cdot\boldsymbol{\ell}}},P_1)=\sum_{p\mid {2\prod_{j=1}^4\alpha_j}}\sum_{\textbf{b}(p)\in\{0,1\}^4}\prod_{i=1}^4\mu(p^{b(p)_i})r_{X^{\prod_{p\mid2\alpha_j} p^{c_p\cdot\textbf{b}(p)}\cdot\textbf{d}\cdot\boldsymbol{\ell}}}(h).$$
        We decompose 
        \begin{equation}
            r_{X^{\boldsymbol{d\ell}}}(h)=a_{E_{X^{\boldsymbol{d\ell}}}}(h)+(r_{X^{\boldsymbol{d\ell}}}(h)-a_{E_{X^{\boldsymbol{d\ell}}}}(h)).
        \end{equation}
        Then we have 
        \begin{equation}
            \begin{split}
                S_{\textbf{c},h}(A_{l},P_{\delta,\Delta_\alpha})\geq \sum_{{\textbf{d}\in S_{\prod_{p\mid 2\prod_{j=1}^4\alpha_j}p^{c_p}}^4\atop{p\mid d_j\implies p\in P_{\delta,\Delta_{\alpha}}}}}\Lambda_{d_1}^-\lambda_{d_2}^+\lambda_{d_3}^+\lambda_{d_4}^+(a_{E_{X^{\boldsymbol{d\ell}}}}(h)
                \\+(r_{X^{\boldsymbol{d\ell}}}(h)-a_{E_{X^{\boldsymbol{d\ell}}}}(h))). 
            \end{split}
        \end{equation}
        \\Since $\mid \Lambda_d^-\mid\leq7$ pulling the absolute value inside the second sum in (5.4) yields the last term on the right-hand side of the proposition, and we henceforth ignore it.
        \\Writing 
        $$C_p:=\sum_{\textbf{b}(p)\in\{0,1\}^4}\prod_{i=1}^4\mu(p^{b(p)_i})(\frac{1}{p^{b(p)_1+b(p)_2+b(p)_3+b(p)_4}}\frac{b_p(h,\lambda_{p^{(b(p)_1,b(p)_2,b(p)_3,b(p)_4)}},0)}{b_p(h,\lambda_{\textbf{1}},0)})^{c_p},$$
        we can conclude that 
        \begin{equation}
        \begin{split}
            \sum_{{\textbf{d}\in S_{\prod_{p\mid 2\prod_{j=1}^4\alpha_j}p^{c_p}}^4\atop{p\mid d_j\implies p\in P_{\delta,\Delta_{\alpha}}}}}\Lambda_{d_1}^-\lambda_{d_2}^+\lambda_{d_3}^+\lambda_{d_4}^+a_{E_{X^{\boldsymbol{d\ell}}}}(h)=\prod_{p\mid2\prod\alpha_j}C_p\sum_{{\textbf{d}\in S_{1}^4}\atop{p\mid d_j\implies p\in P_{\delta,\Delta_\alpha}}}\Lambda_{d_1}^-\lambda_{d_2}^+\lambda_{d_3}^+\lambda_{d_4}^+a_{E_{X^{\boldsymbol{d\ell}}}}(h).     
        \end{split}
        \end{equation}
        Thus we may bound the main term from below by
        \begin{equation}
            \prod_{p\mid2\prod\alpha_j}C_p M_h^-(z_0)a_{E_{X^{\boldsymbol{d\ell}}}}(h).
        \end{equation}
        (2) Using the other inequality yields the claim by the same steps as in (1).
    \end{proof}
    In order to prove Proposition 5.1, we need to get bounds for the main term.
    \begin{proof}[Proof of Proposition 5.1]
        The error terms are of the same size in Proposition 5.2 (1) and (2), so
        it remains to bound the main terms.
        We first consider the main term from Proposition 5.2 (2). We separate the terms off which have $\gcd(d_j,d_k) > \Delta$ for some $j \neq k.$
        \begin{align*}
            &\sum_{d_1\mid P(z_0)}\lambda_{d_1}^+\sum_{d_2\mid P(z_0)}\lambda_{d_2}^+\sum_{d_3\mid P(z_0)}\lambda_{d_3}^+\sum_{d_4\mid P(z_0)\atop \gcd(d_3,d_4)>\Delta}\lambda_{d_4}^+\prod_{p\mid \textbf{d}}\beta_{X^{\textbf{d}},p}(h)
            \\& \leq \sum_{d_1\mid P(z_0)}\sum_{d_2\mid P(z_0)}\sum_{d_3\mid P(z_0)}\sum_{d_4\mid P(z_0)\atop \gcd(d_3,d_4)>\Delta}\prod_{p\mid \textbf{d}}\beta_{X^{\textbf{d}},p}(h)
            \\& \ll(\sum_{d_1\mid P(z_0)}\frac{\tilde{w}(d_1)}{d_1})^2\sum_{\epsilon\geq\Delta}(\sum_{d_2\mid P(z_0)\atop \epsilon\mid d_2}\frac{\tilde{w}(d_2)}{d_2})^2
            \\& \ll \prod_{p\in P_{z_0,\Delta_{\alpha}}}(1+\frac{\tilde{w}(p)}{p})^4\sum_{\epsilon\geq\Delta}\mu(\epsilon)^2(\frac{\tilde{w}(\epsilon)}{\epsilon})^2
        \end{align*}
        By Lemma 3.7 
        \begin{align*}     \sum_{\epsilon\geq\Delta}\mu(\epsilon)^2(\frac{\tilde{w}(\epsilon)}{\epsilon})^2 &\leq \sum_{\epsilon\geq\Delta}(\frac{\epsilon}{\Delta})^{\frac{1}{2}}\mu(\epsilon)^2(\frac{\tilde{w}(\epsilon)}{\epsilon})^2
            \leq \Delta^{-\frac{1}{2}}\sum_{\epsilon\geq 1}\mu(\epsilon)^2(\frac{\tilde{w}(\epsilon)^2}{\epsilon^{\frac{3}{2}}})
            \\& =\Delta^{-\frac{1}{2}}\prod_{p}(1+\frac{\tilde{w}(p)^2}{p^{\frac{3}{2}}})\leq \Delta^{-\frac{1}{2}}\prod_{p\mid n}(1+\frac{\tilde{w}(p)^2}{p^{\frac{3}{2}}})\prod_{p}(1+\frac{4}{p^{\frac{3}{2}}})
            \\& \ll\Delta^{-\frac{1}{2}}\prod_{p\mid n}(1+\frac{\tilde{w}(p)^2}{p^{\frac{3}{2}}}) \ll\Delta^{-\frac{1}{2}}H(n).
        \end{align*}
        Then we have 
        $$\prod_{p\in P_{z_0,\Delta_{\alpha}}}(1+\frac{\tilde{w}(p)}{p})\leq \prod_{p\leq z_0}(1+\frac{\tilde{w}(p)}{p}) \ll H(n)\log(z_0)^2.$$
        Hence we may write $M_h^+(z_0)$ as 
        $$\sum_{d_1\mid P(z_0)}\lambda_{d_1}^+\sum_{d_2\mid P(z_0)}\lambda_{d_2}^+\sum_{d_3\mid P(z_0)}\lambda_{d_3}^+\sum_{d_4\mid P(z_0)\atop \gcd(d_3,d_4)\leq \Delta}\lambda_{d_4}^+\prod_{p\mid \textbf{d}}\beta_{X^{\textbf{d}},p}(h)+O(\Delta^{-\frac{1}{2}}H(n)^5\log(z_0)^8)$$
        By grouping those $\textbf{d}$ with $\gcd(d_i,d_j)=u_{i,j}.$ We have
        \begin{align*}
            \sum_{d_1\mid P(z_0)}\lambda_{d_1}^+ &\sum_{d_2\mid P(z_0)}\lambda_{d_2}^+\sum_{d_3\mid P(z_0)}\lambda_{d_3}^+\sum_{d_4\mid P(z_0)\atop \gcd(d_3,d_4)\leq \Delta}\lambda_{d_4}^+\prod_{p\mid \textbf{d}}\beta_{X^{\textbf{d}},p}(h)
            \\& =\sum_{u_{1,2}\leq \Delta}\cdot\cdot\cdot\sum_{u_{3,4}\leq \Delta}\sum_{d_1\mid P(z_0)}\lambda_{d_1}^+\sum_{d_2\mid P(z_0)}\lambda_{d_2}^+\sum_{d_3\mid P(z_0)}\lambda_{d_3}^+\sum_{d_4\mid P(z_0)\atop \gcd(d_3,d_4)=u_{i,j}}\lambda_{d_4}^+\prod_{p\mid \textbf{d}}\beta_{X^{\textbf{d}},p}(h).
        \end{align*}
        Since $g(\textbf{d})$ only depends on $u_{i,j}$ by Lemma 3.8, we may write 
        \begin{multline*}
            \sum_{u_{1,2}\leq \Delta} \cdot\cdot\cdot\sum_{u_{3,4}\leq \Delta}\sum_{d_1\mid P(z_0)}\lambda_{d_1}^+\sum_{d_2\mid P(z_0)}\lambda_{d_2}^+\sum_{d_3\mid P(z_0)}\lambda_{d_3}^+\sum_{d_4\mid P(z_0)\atop \gcd(d_3,d_4)=u_{i,j}}\lambda_{d_4}^+\prod_{p\mid \textbf{d}}\beta_{X^{\textbf{d}},p}(h)
            \\ = \sum_{u_{1,2}\leq \Delta}\cdot\cdot\cdot\sum_{u_{3,4}\leq \Delta}g(\textbf{d})(\sum_{d_1\mid P(z_0)}\lambda_{d_1}^+\prod_{p\mid \textbf{d}_1}\beta_{X^{(p,1,1,1)},p}(h)\sum_{d_2\mid P(z_0)}\lambda_{d_2}^+\prod_{p\mid \textbf{d}_2}\beta_{X^{(1,p,1,1)},p}(h)
            \\\cdot\sum_{d_3\mid P(z_0)}\lambda_{d_3}^+\prod_{p\mid \textbf{d}_3}\beta_{X^{(1,1,p,1)},p}(h)\sum_{d_4\mid P(z_0)\atop \gcd(d_3,d_4)=u_{i,j}}\lambda_{d_4}^+\prod_{p\mid \textbf{d}_4}\beta_{X^{(1,1,1,p)},p}(h)).
        \end{multline*}
        We rewrite the $\gcd$ condition on the inner sums by excluding divisibility by other primes using $\mu$ (or inclusion/exclusion). Letting $l_{i,j}$ run through all possible divisors of $P(z_0)$ relatively prime to $u_{i,j}$, this gives
        \begin{multline*}
            \sum_{d_1\mid P(z_0)}\lambda_{d_1}^+\prod_{p\mid \textbf{d}_1}\beta_{X^{(p,1,1,1)},p}(h)\sum_{d_2\mid P(z_0)}\lambda_{d_2}^+\prod_{p\mid \textbf{d}_2}\beta_{X^{(1,p,1,1)},p}(h)
            \\\cdot\sum_{d_3\mid P(z_0)}\lambda_{d_3}^+\prod_{p\mid \textbf{d}_3}\beta_{X^{(1,1,p,1)},p}(h)\sum_{d_4\mid P(z_0)\atop \gcd(d_3,d_4)=u_{i,j}}\lambda_{d_4}^+\prod_{p\mid \textbf{d}_4}\beta_{X^{(1,1,1,p)},p}(h)
        \end{multline*}
        \begin{multline*}
            =\sum_{l_{i,j}\atop 1\leq i,j\leq4}\mu(l_{i,j})\sum_{d_1\mid P(z_0)\atop lcm(u_{1,2}l_{1,2},u_{1,3}l_{1,3},u_{1,4}l_{1,4})}\lambda_{d_1}^+\prod_{p\mid \textbf{d}_1}\beta_{X^{(p,1,1,1)},p}(h)
            \\\cdot\cdot\cdot\sum_{d_4\mid P(z_0)\atop lcm(u_{1,4}l_{1,4},u_{2,4}l_{2,4},u_{3,4}l_{3,4})}\lambda_{d_4}^+\prod_{p\mid \textbf{d}_4}\beta_{X^{(1,1,1,p)},p}(h).
        \end{multline*}
        By Lemma 3.7 we have
        \begin{multline*}
            \sum_{d_1\mid P(z_0)\atop \delta\mid d_1}\lambda_{d_1}^+\prod_{p\mid \textbf{d}_1}\beta_{X^{(p,1,1,1)},p}(h) \leq \prod_{p\mid \delta}\beta_{X^{(\delta,1,1,1)},p}(h)\cdot \prod_{p\mid P(z_0)}(1+\frac{2}{p})
            \\ \ll \frac{2^{w(\delta)}}{\varphi(\delta)}\log(z_0)^2
            \ll \frac{\sigma(\delta)^2}{\delta}\log(z_0)^2.
        \end{multline*}
        Then we want to bound 
        $$\sum_{l_{1,2},\cdot\cdot\cdot,l_{3,4}\leq D_0\atop \Delta^{B}<l_{1,2}}
        \frac{\mu(l_{1,2})^2\cdot\cdot\cdot \mu(l_{3,4})^2\sigma_0(l_{1,2})^4\cdot\cdot\cdot \sigma_0(l_{3,4})^4\gcd(l_{1,2},l_{1,3})\cdot\cdot\cdot\gcd(l_{2,4},l_{3,4})}{l_{1,2}^2\cdot\cdot\cdot l_{3,4}^2\gcd(l_{1,2},l_{1,3},l_{1,4})\cdot\cdot\cdot\gcd(l_{1,4},l_{2,4},l_{3,4})}$$
        We then bound
        \begin{multline*}
            \sum_{l_{3,4}\leq D_0}
            \frac{ \mu(l_{3,4})^2\sigma_0(l_{3,4})^4\gcd(l_{1,3},l_{3,4})\gcd(l_{1,4},l_{3,4})\gcd(l_{2,4},l_{3,4})}{l_{3,4}^2\gcd(l_{1,3},l_{2,3},l_{3,4})\gcd(l_{1,4},l_{2,4},l_{3,4})}
            \\ \leq \prod_{p\mid l_{1,3}l_{2,3}l_{1,4}l_{2,4}}(2^4+1)\prod_{p}(1+\frac{2^4}{p^2}) \ll (\sigma_0(l_{1,3})\sigma_0(l_{1,4})\sigma_0(l_{2,3})\sigma_0(l_{2,4}))^{5}
        \end{multline*}
        Then bound 
        \begin{multline*}
            \sum_{l_{2,4}\leq D_0}
            \frac{ \mu(l_{2,4})^2\sigma_0(l_{2,4})^{9}\gcd(l_{1,2},l_{2,4})\gcd(l_{2,3},l_{2,4})\gcd(l_{1,4},l_{2,4})}{l_{2,4}^2\gcd(l_{1,2},l_{2,3},l_{2,4})}
            \\ \leq \prod_{p\mid l_{1,3}l_{2,3}l_{1,4}}(2^{9}+1)\prod_{p}(1+\frac{2^{9}}{p^2}) \ll (\sigma_0(l_{1,2})\sigma_0(l_{2,3})\sigma_0(l_{1,4}))^{10}
        \end{multline*}
        Then bound 
        \begin{multline*}
            \sum_{l_{1,4}\leq D_0}
            \frac{ \mu(l_{1,4})^2\sigma_0(l_{1,4})^{19}\gcd(l_{1,2},l_{1,4})\gcd(l_{1,3},l_{1,4})}{l_{1,4}^2\gcd(l_{1,2},l_{1,3},l_{1,4})}
            \\ \leq \prod_{p\mid l_{1,2}l_{1,3}}(2^{19}+1)\prod_{p}(1+\frac{2^{19}}{p^2}) \ll (\sigma_0(l_{1,2})\sigma_0(l_{1,3}))^{20}
        \end{multline*}
        Then bound 
        \begin{multline*}
            \sum_{l_{2,3}\leq D_0}
            \frac{ \mu(l_{2,3})^2\sigma_0(l_{2,3})^{19}\gcd(l_{1,2},l_{2,3})\gcd(l_{1,3},l_{2,3})}{l_{2,3}^2}
            \\ \leq \prod_{p\mid l_{1,2}}(2^{19}+1)\prod_{p}(1+\frac{2^{19}}{p^2}) \ll \sigma_0(l_{1,2})^{20}
        \end{multline*}
        Finally the sum over $i_{1,2}$ is bounded by 
            $$\sum_{l_{1,2} > \Delta^{B}}
            \frac{ \mu(l_{1,2})^2\sigma_0(l_{1,2})^{54}}{l_{1,2}^2}
             \ll_\epsilon \Delta^{\epsilon-B}.$$
        Therefore the contribution from $l_{i,j}$ with at least one of them $> \Delta^{B}$ is 
        $$\ll_\epsilon \Delta^{\epsilon-B} \log(z_0)^8.$$
        And we finally use [2, Lemma 11] to finish the proof for the upper bound. Since we have taken the absolute value at each step, the proof for the lower bound follows by the same argument, using Proposition 5.2 (1) as a starting point.  
        \end{proof}

\section{Sieving and proof of Theorem 1.1}
    We employ a second sieve to prove Theorem 1.1.
    \begin{proof}[Proof of Theorem 1.1] 
    Suppose that $z_0 < z$ and set $P_h(z_0, z) := \prod_{z_0<p\leq z \atop p\mid\prod_{j=1}^4\alpha_j}p.$ Then we have
    \begin{align*}
        S_{c,h}(A_{\textbf{1},z}) &=\sum_{F_{\textbf{c,h}}(A_{\textbf{1}},z_0)}\prod_{j=1}^4\sum_{l_{j}\mid \gcd(x_j,P_h(z_0, z))}\mu(l_j)
        \\&= \sum_{F_{\textbf{c,h}}(A_{\textbf{1}},z_0)}\sum_{l_{1}\mid P_h(z_0, z)\atop l_1\mid x_1}\mu(l_1)\sum_{l_{2}\mid P_h(z_0, z)\atop l_2\mid x_2}\mu(l_2)\sum_{l_{3}\mid P_h(z_0, z)\atop l_3\mid x_3}\mu(l_3)\sum_{l_{4}\mid P_h(z_0, z)\atop l_4\mid x_4}\mu(l_4)
        \\& \geq \sum_{F_{\textbf{c,h}}(A_{\textbf{1}},z_0)}\sum_{l_{1}\mid P_h(z_0, z)\atop l_1\mid x_1}\Lambda_{l_1}^-\sum_{l_{2}\mid P_h(z_0, z)\atop l_2\mid x_2}\lambda_{l_2}^+\sum_{l_{3}\mid P_h(z_0, z)\atop l_3\mid x_3}\lambda_{l_3}^+\sum_{l_{4}\mid P_h(z_0, z)\atop l_4\mid x_4}\lambda_{l_4}^+
        \\& =\sum_{l_{1}\mid P_h(z_0, z)}\Lambda_{l_1}^-\sum_{l_{2}\mid P_h(z_0, z)}\lambda_{l_2}^+\sum_{l_{3}\mid P_h(z_0, z)}\lambda_{l_3}^+\sum_{l_{4}\mid P_h(z_0, z)}\lambda_{l_4}^+S_{\textbf{c},h}(A_{\boldsymbol{\ell}},z_0).
    \end{align*}
    We then plug in Proposition 5.1 in the inner sum to obtain 
    \begin{multline*}
        S_{\textbf{c},h}(A_{\textbf{1}},z)\geq \sum_{\ell_{1}\mid P_h(z_0, z)}\Lambda_{\ell_1}^-\sum_{\ell_{2}\mid P_h(z_0, z)}\lambda_{\ell_2}^+\sum_{\ell_{3}\mid P_h(z_0, z)}\lambda_{\ell_3}^+\sum_{\ell_{4}\mid P_h(z_0, z)}\lambda_{\ell_4}^+ (W_{c,h}(n^\delta)+  
        \\O_{\epsilon}(\Delta^{-\frac{1}{2}}H(n)^5\log(n^\delta)
        +\Delta^{\epsilon-B}\log(n^\delta)^8+\Delta^Ce^{-s_0}))a_{E_{X^{\boldsymbol{d\ell}}}}(h)
        \\+O(\sum_{{\boldsymbol{\ell}\in S_{\prod_{p\mid 2\prod_{j=1}^4\alpha_j}p^{c_p}}^4\atop{p\mid \ell_j\implies p\mid P_{h}(z_0,z) \atop \ell_j\leq D}}}
        \sum_{{\textbf{d}\in S_{\prod_{p\mid 2\prod_{j=1}^4\alpha_j}p^{c_p}}^4\atop{p\mid d_j\implies p\in P_{\delta,\Delta_{\alpha}}}}\atop d_j\leq D_0}\mid r_{X^{{\textbf{d}}\cdot{\boldsymbol{\ell}}}}(h)-a_{E_{X^{\boldsymbol{d\ell}}}}(h)\mid)).
    \end{multline*}
    We first bound the last O-term. Lemma 4.2 implies that
    \begin{align*}
        \sum_{{\boldsymbol{\ell}\in S_{\prod_{p\mid 2\prod_{j=1}^4\alpha_j}p^{c_p}}^4\atop{p\mid l_j\implies p\mid P_{h}(z_0,z) \atop l_j\leq D}}}
        &\sum_{{\textbf{d}\in S_{\prod_{p\mid 2\prod_{j=1}^4\alpha_j}p^{c_p}}^4\atop{p\mid d_j\implies p\in P_{\delta,\Delta_{\alpha}}}}\atop d_j\leq D_0}\mid r_{X^{{\textbf{d}}\cdot{\boldsymbol{\ell}}}}(h)-a_{E_{X^{\boldsymbol{d\ell}}}}(h)\mid
        \\& \ll \sum_{{\boldsymbol{\ell}\in S_{\prod_{p\mid 2\prod_{j=1}^4\alpha_j}p^{c_p}}^4\atop{p\mid l_j\implies p\mid P_{h}(z_0,z) \atop l_j\leq D}}}
        \sum_{{\textbf{d}\in S_{\prod_{p\mid 2\prod_{j=1}^4\alpha_j}p^{c_p}}^4\atop{p\mid d_j\implies p\in P_{\delta,\Delta_{\alpha}}}}\atop d_j\leq D_0}N_{\alpha,\boldsymbol{d\ell}}^{\frac{5}{2}+\epsilon}h^{\frac{1}{2}+\epsilon}
    \end{align*}
    where $N_{\alpha,\boldsymbol{d\ell}}$ is the level of $f_{\boldsymbol{\alpha}}(\boldsymbol{d\ell}\textbf{x}).$
    Since $\ell_j\leq D$, $d_j\leq D_0,$ $N_{\alpha,\boldsymbol{d\ell}}\ll_m\prod_{j=1}^4d_j^2l_j^2\alpha_j,$ we have 
    \begin{align}
        \sum_{{\boldsymbol{\ell}\in S_{\prod_{p\mid 2\prod_{j=1}^4\alpha_j}p^{c_p}}^4\atop{p\mid \ell_j\implies p\mid P_{h}(z_0,z) \atop \ell_j\leq D}}}
        &\sum_{{\textbf{d}\in S_{\prod_{p\mid 2\prod_{j=1}^4\alpha_j}p^{c_p}}^4\atop{p\mid d_j\implies p\in P_{\delta,\Delta_{\alpha}}}}\atop d_j\leq D_0}N_{\alpha,\boldsymbol{d\ell}}^{\frac{11}{4}+\epsilon}h^{\frac{3}{5}}
        \ll_m(DD_0)^4(DD_0)^{22+\epsilon}(\prod_{j=1}^4\alpha_j)^{\frac{11}{4}+\epsilon}h^{\frac{1}{2}+\epsilon}
    \end{align}
    $$\qquad\qquad\qquad\qquad\qquad\qquad\qquad=(DD_0)^{26+\epsilon}(\prod_{j=1}^4\alpha_j)^{\frac{11}{4}+\epsilon}h^{\frac{1}{2}+\epsilon}.$$
    For the error terms from the main term, we again note that they are independent of $\boldsymbol{\ell}$, so, using [1, (7.2)], they may be bounded by
    \begin{equation}
        (\Delta^{-\frac{1}{2}}H(n)^5\log(n^\delta)
        +\Delta^{\epsilon-B}\log(n^\delta)^8+\Delta^Ce^{-s_0})\sum_{{\boldsymbol{\ell}\in S_{\prod_{p\mid 2\prod_{j=1}^4\alpha_j}p^{c_p}}^4\atop{p\mid \ell_j\implies p\mid P_{h}(z_0,z) \atop l_j\leq D}}}a_{E_{X^{\boldsymbol{d\ell}}}}(h)
    \end{equation}
    $$\ll (\Delta^{-\frac{1}{2}}H(n)^5\log(n^\delta)
        +\Delta^{\epsilon-B}\log(n^\delta)^8+\Delta^Ce^{-s_0})(\frac{\log(z)}{\log(z_0)})^{6}a_{E_{X^{\boldsymbol{d\ell}}}}(h).$$
    For the main term, we write 
    \begin{align}
        \sum_{\ell_{1}\mid P_h(z_0, z)}\Lambda_{\ell_1}^-\sum_{\ell_{2}\mid P_h(z_0, z)}\lambda_{\ell_2}^+\sum_{\ell_{3}\mid P_h(z_0, z)}\lambda_{\ell_3}^+\sum_{\ell_{4}\mid P_h(z_0, z)}\lambda_{\ell_4}^+\prod_{p\mid\boldsymbol{\ell}}\beta_{X^{\boldsymbol{\ell}},p}(h)
    \end{align}
    \begin{multline*}
        =\sum_{\ell_{1}\mid P_h(z_0, z)}\Lambda_{\ell_1}^-\sum_{\ell_{2}\mid P_h(z_0, z)}\lambda_{\ell_2}^+\sum_{\ell_{3}\mid P_h(z_0, z)}\lambda_{\ell_3}^+\sum_{\ell_{4}\mid P_h(z_0, z)\atop \gcd(\ell_i,\ell_j)=1}\lambda_{\ell_4}^+\prod_{p\mid\boldsymbol{\ell}}\beta_{X^{\boldsymbol{\ell}},p}(h)
        \\+ \sum_{\boldsymbol{\ell}\mid P_{h}(z_0,z)^4\atop \exists(j,k):\gcd(\ell_j,\ell_k)\neq 1}\Lambda_{\ell_1}^-\lambda_{\ell_2}^+\lambda_{\ell_3}^+\lambda_{\ell_4}^+\prod_{p\mid\boldsymbol{\ell}}\beta_{X^{\boldsymbol{\ell}},p}(h)
    \end{multline*}
    \begin{multline*}
        =\sum_{\ell_1\mid P_{h}(z_0,z)}\Lambda_{\ell_1}^-\prod_{p\mid \ell_1}\beta_{X^{(\ell_1,1,1,1)},p}(h)\sum_{\ell_2\mid P_{h}(z_0,z)}\lambda_{\ell_2}^+\prod_{p\mid \ell_2}\beta_{X^{(1,\ell_1,1,1)},p}(h)
        \\\cdot\sum_{\ell_3\mid P_{h}(z_0,z)}\lambda_{\ell_3}^+\prod_{p\mid \ell_3}\beta_{X^{(1,1,\ell_1,1)},p}(h)\sum_{\ell_4\mid P_{h}(z_0,z)}\lambda_{\ell_4}^+\prod_{p\mid \ell_4}\beta_{X^{(1,1,1,\ell_1)},p}(h)
        \\+\sum_{\boldsymbol{\ell}\mid p_{h}(z_0,z)^4\atop \exists(j,m):\gcd(\ell_j,\ell_k)\neq 1}\Lambda_{\ell_1}^-\lambda_{\ell_2}^+\lambda_{\ell_3}^+\lambda_{\ell_4}^+(\prod_{p\mid\boldsymbol{\ell}}\beta_{X^{\boldsymbol{\ell}},p}(h)-\prod_{p\mid \ell_1}\beta_{X^{(\ell_1,1,1,1)},p}(h)\prod_{p\mid \ell_2}\beta_{X^{(1,\ell_1,1,1)},p}(h)
        \\\cdot\prod_{p\mid \ell_3}\beta_{X^{(1,1,\ell_1,1)},p}(h)\prod_{p\mid \ell_4}\beta_{X^{(1,1,1,\ell_1)},p}(h)).
    \end{multline*}
    Abbreviating $\beta_{\ell_1,g}:=\prod_{p\mid\gcd(\ell_1,g)}\beta_{X^{(\ell_1,1,1,1)},p}(h),...,\beta_{\ell_4,g}:=\prod_{p\mid\gcd(\ell_4,g)}\beta_{X^{(1,1,1,\ell_1)},p}(h)$ Then we can bound the last sum as
    \begin{equation}
        \ll \sum_{g\mid P_h(z_0,z)\atop g\neq 1}\sum_{\substack{\boldsymbol{\ell}\mid P_h(z_0,z)^4\\ g_1=\gcd(\ell_1,\ell_2)\mid g \\...\\g_6=\gcd(\ell_3,\ell_4)\mid g)}}\prod_{p\mid \boldsymbol{\ell}\atop p \nmid g}\beta_{X^{\boldsymbol{\ell}},p}(h)\mid\prod_{p\mid g}\beta_{X^{\boldsymbol{\ell}},p}(h) - \beta_{\ell_1,g}\beta_{\ell_2,g}\beta_{\ell_3,g}\beta_{\ell_4,g}\mid.
    \end{equation}
    We then bound the absolute value by
    \begin{equation}
        \mid\prod_{p\mid g}\beta_{X^{\boldsymbol{\ell}},p}(h) - \beta_{\ell_1,g}\beta_{\ell_2,g}\beta_{\ell_3,g}\beta_{\ell_4,g}\mid
        \leq \max\{\prod_{p\mid g}\beta_{X^{\boldsymbol{\ell}},p}(h), \beta_{\ell_1,g}\beta_{\ell_2,g}\beta_{\ell_3,g}\beta_{\ell_4,g}\}
    \end{equation}
    $$\leq \prod_{p\mid g}\max\{\max\{\beta_{X^{\boldsymbol{\ell}},p}(h)\}, \max_i\{\beta_{\ell_i,g}\}^{\#\{j:p\mid \ell_j\}}\}.$$
    The product after sum becomes
    \begin{equation}
        \leq \prod_{p\mid l \atop \#\{j:p\mid \ell_j\}=1}\max_i\{\beta_{\ell_i,p}(h)\}
        \cdot \prod_{p\mid g \atop \#\{j:p\mid \ell_j\}=2}\max\{\max\{\beta_{X^{(p,p,1,1)},p}(h),...,\beta_{X^{(1,1,p,p)},p}(h)\},\max_i\{\beta_{\ell_i,g}\}^{2}\}
    \end{equation}
    \begin{equation*}
        \cdot \prod_{p\mid g \atop \#\{j:p\mid \ell_j\}=3}\max\{\max\{\beta_{X^{(p,p,p,1)},p}(h),...,\beta_{X^{(1,p,p,p)},p}(h)\},\max_i\{\beta_{\ell_i,g}\}^{3}\}
    \end{equation*}
    \begin{equation*}
        \cdot \prod_{p\mid g \atop \#\{j:p\mid \ell_j\}=4}\max\{\beta_{X^{(p,p,p,p)},p}(h)\},\max_i\{\beta_{\ell_i,g}\}^{4}\}.
    \end{equation*}
    Without loss of generality, taking $z_0$ sufficiently large, we may assume that $p \nmid 2(m-2)(m-4)$ for every $p \mid \boldsymbol{\ell}.$ Thus, using Lemma 3.9, we may bound (6.6) against
    \begin{align*}
        &\leq \prod_{p\mid {\boldsymbol{\ell}}\atop \#\{j:p\mid \ell_j\}=1}\frac{4}{p}\cdot \prod_{p\mid g\atop \#\{j:p\mid \ell_j\}=2}\frac{16}{p^2}\cdot\prod_{p\mid g\atop \#\{j:p\mid \ell_j\}=3}\frac{64}{p^3}\cdot\prod_{\substack{p\mid {\boldsymbol{\ell}}\\ \#\{j:p\mid \ell_j\}=1\\p\nmid n }}\frac{256}{p^4}\cdot \prod_{\substack{p\mid {\boldsymbol{\ell}}\\ \#\{j:p\mid \ell_j\}=1\\p\mid n }}\frac{1}{(p-1)^2(p+1)}
        \\&=\frac{1}{l_1l_2l_3l_4}\prod_{p\mid {\boldsymbol{\ell}}\atop \#\{j:p\mid \ell_j\}=1}4 \prod_{p\mid g\atop \#\{j:p\mid \ell_j\}=2}4\prod_{p\mid g\atop \#\{j:p\mid \ell_j\}=3}16\prod_{\substack{p\mid {\boldsymbol{\ell}}\\ \#\{j:p\mid \ell_j\}=1\\p\nmid n }}64 \prod_{\substack{p\mid {\boldsymbol{\ell}}\\ \#\{j:p\mid \ell_j\}=1\\p\mid n }}\frac{p^4}{4(p-1)^2(p+1)}
        \\&\leq \frac{4^{w^{(\ell_1\ell_2\ell_3\ell_4)}}64^{w(g)}}{\ell_1\ell_2\ell_3\ell_4}\gcd(g,n,\ell_1,\ell_2,\ell_3,\ell_4)
        \\ &\leq\frac{4^{w(\ell_1)}}{l_1}\frac{4^{w(\ell_2)}}{l_2}\frac{4^{w(\ell_3)}}{l_3}\frac{4^{w(\ell_4)}}{l_4}64^{w(g)}\gcd(g,n,\ell_1,\ell_2,\ell_3,\ell_4).
    \end{align*}
     
    Hence we can bound (6.4) by 
    \begin{equation}
        \sum_{g\mid P_{h}(z_0,z)}\sum_{\substack{\boldsymbol{\ell}\mid P_h(z_0,z)^4
        \\ g_{1,2}=\gcd(\ell_1,\ell_2)\mid g\\...\\g_{3,4}=\gcd(\ell_3,\ell_4)\mid g \\ lcm(g_{1,2},...,g_{3,4})=g}}\frac{4^{w(\ell_1)}}{\ell_1}\frac{4^{w(\ell_2)}}{\ell_2}\frac{4^{w(\ell_3)}}{\ell_3}\frac{4^{w(\ell_4)}}{\ell_4}64^{w(g)}\gcd(g,n,\ell_1,\ell_2,\ell_3,\ell_4).
    \end{equation}
    Let $g_1:=\text{lcm}(g_{1,2},g_{1,3},g_{1,4}),...,g_4:=\text{lcm}(g_{1,4},g_{2,4},g_{3,4}),$ and since $g_j\mid \ell_j,$ by a change of variable $\ell_j \xrightarrow{} g_j\ell_j,$ (6.7) become
    \begin{multline*}
        \sum_{\substack{g\mid P_{h}(z_0,z)\\g\neq 1\\g_1,g_2,g_3,g_4\mid g\\ \text{lcm}(\gcd(g_1,g_2),...,\gcd(g_3,g_4))=g}}\frac{4^{w(g_1)}}{g_1}\frac{4^{w(g_2)}}{g_2}\frac{4^{w(g_3)}}{g_3}\frac{4^{w(g_4)}}{g_4}64^{w(g)}\gcd(n,g_1,g_2,g_3,g_4).
        \\\cdot\sum_{\substack{\boldsymbol{\ell}\mid P_h(z_0,z)^4
        \\ g_{1,2}=\gcd(\ell_1,\ell_2)\mid g\\...\\g_{3,4}=\gcd(\ell_3,\ell_4)\mid g \\ lcm(g_{1,2},...,g_{3,4})=g}}\frac{4^{w(\ell_1)}}{\ell_1}\frac{4^{w(\ell_2)}}{\ell_2}\frac{4^{w(\ell_3)}}{\ell_3}\frac{4^{w(\ell_4)}}{\ell_4}
        \\\leq \sum_{\substack{g\mid P_{h}(z_0,z)\\g\neq 1\\g_1,g_2,g_3,g_4\mid g\\ \text{lcm}(\gcd(g_1,g_2),...,\gcd(g_3,g_4))=g}}\prod_{p\mid g}
        \frac{4^{4}64p^{\text{min}\{\text{ord}_p(g_1),\text{ord}_p(g_2),\text{ord}_p(g_3),\text{ord}_p(g_4)\}}}{p^{\text{ord}_p(g_1)+\text{ord}_p(g_2)+\text{ord}_p(g_3)+\text{ord}_p(g_4)}}
        \\\cdot\sum_{\boldsymbol{\ell}\mid P_h(z_0,z)^4
        }\frac{4^{w(\ell_1)}}{\ell_1}\frac{4^{w(\ell_2)}}{\ell_2}\frac{4^{w(\ell_3)}}{\ell_3}\frac{4^{w(\ell_4)}}{\ell_4}
    \end{multline*}
    We then note that for $p\mid g$
    \begin{multline*}
    2\leq \text{ord}_p(g_1)+\text{ord}_p(g_2)+\text{ord}_p(g_3)+\text{ord}_p(g_4)
    \\-\text{min}\{\text{ord}_p(g_1),\text{ord}_p(g_2),\text{ord}_p(g_3),\text{ord}_p(g_4)\}\leq 3       
    \end{multline*}
     because $\text{lcm}(g_{1,2},...,g_{3,4})=g.$ Writing 
     \begin{multline*}
       \sum_{\boldsymbol{\ell}\mid P_h(z_0,z)^4}\frac{4^{w(\ell_1)}}{\ell_1}\frac{4^{w(\ell_2)}}{\ell_2}\frac{4^{w(\ell_3)}}{\ell_3}\frac{4^{w(\ell_4)}}{\ell_4}=\prod_{p\mid P_h(z_0,z)}(1+\frac{4}{p})^4  
       \\\leq \prod_{p\mid P_h(z_0,z)}(1+\frac{1}{p})^{16} \ll (\frac{\log(z)}{\log(z_0)})^{16}.
     \end{multline*}
     And writing 
     \begin{multline*}
         \sum_{\substack{g\mid P_{h}(z_0,z)\\g\neq 1\\g_1,g_2,g_3,g_4\mid g\\ \text{lcm}(\gcd(g_1,g_2),...,\gcd(g_3,g_4))=g}}\prod_{p\mid g}
        \frac{4^364p^{\text{min}\{\text{ord}_p(g_1),\text{ord}_p(g_2),\text{ord}_p(g_3),\text{ord}_p(g_4)\}}}{p^{\text{ord}_p(g_1)+\text{ord}_p(g_2)+\text{ord}_p(g_3)+\text{ord}_p(g_4)}}
        \\\leq \sum_{\substack{g\mid P_h(z_0,z)\\g\neq 1\\g_1,g_2,g_3,g_4\mid g}}\prod_{p\mid g}\frac{4^364}{p^2}=\sum_{\substack{g\mid P_h(z_0,z)\\g\neq 1}}\sigma_0(g)^4\prod_{p\mid g}\frac{4^364}{p^2}\ll_\epsilon\sum_{g\geq z_0}g^{\epsilon-2}\ll_\epsilon z_0^{\epsilon-1}
     \end{multline*}
    Thus we have the following bound
    \begin{multline*}
        \sum_{\boldsymbol{\ell}\mid p_{h}(z_0,z)^4\atop \exists(j,m):\gcd(\ell_j,\ell_k)\neq 1}\Lambda_{\ell_1}^-\lambda_{\ell_2}^+\lambda_{\ell_3}^+\lambda_{\ell_4}^+(\prod_{p\mid\boldsymbol{\ell}}\beta_{X^{\boldsymbol{\ell}},p}(h)-\prod_{p\mid \ell_1}\beta_{X^{(\ell_1,1,1,1)},p}(h)\prod_{p\mid \ell_2}\beta_{X^{(1,\ell_1,1,1)},p}(h)
        \\\cdot\prod_{p\mid \ell_3}\beta_{X^{(1,1,\ell_1,1)},p}(h)\prod_{p\mid \ell_4}\beta_{X^{(1,1,1,\ell_1)},p}(h)).
        \\\ll_\epsilon z_0^{\epsilon-1}(\frac{\log(z)}{\log(z_0)})^{16}
    \end{multline*}
    Plugging in back to (6.3), we have 
    \begin{equation}
        \sum_{\ell_{1}\mid P_h(z_0, z)}\Lambda_{\ell_1}^-\sum_{\ell_{2}\mid P_h(z_0, z)}\lambda_{\ell_2}^+\sum_{\ell_{3}\mid P_h(z_0, z)}\lambda_{\ell_3}^+\sum_{\ell_{4}\mid P_h(z_0, z)}\lambda_{\ell_4}^+\prod_{p\mid\boldsymbol{\ell}}\beta_{X^{\boldsymbol{\ell}},p}(h)
    \end{equation}
    \begin{multline*}
        =\sum_{\ell_1\mid P_{h}(z_0,z)}\Lambda_{\ell_1}^-\prod_{p\mid \ell_1}\beta_{X^{(\ell_1,1,1,1)},p}(h)\sum_{\ell_2\mid P_{h}(z_0,z)}\lambda_{\ell_2}^+\prod_{p\mid \ell_2}\beta_{X^{(1,\ell_1,1,1)},p}(h)
        \\\cdot\sum_{\ell_3\mid P_{h}(z_0,z)}\lambda_{\ell_3}^+\prod_{p\mid \ell_3}\beta_{X^{(1,1,\ell_1,1)},p}(h)\sum_{\ell_4\mid P_{h}(z_0,z)}\lambda_{\ell_4}^+\prod_{p\mid \ell_4}\beta_{X^{(1,1,1,\ell_1)},p}(h)
        \\+O_{\epsilon}(z_0^{\epsilon-1}log(z)^{16})
    \end{multline*}
    By [3, (6.32)], we have
    \begin{equation}
        \sum_{\ell_2\mid P_{h}(z_0,z)}\lambda_{\ell_2}^+\prod_{p\mid \ell_2}\beta_{X^{(1,\ell_1,1,1)},p}(h)\geq \prod_{z_0< p\leq z\atop p\mid \prod_{j=1}^4\alpha_j}(1-\beta_{X^{(1,p,1,1)},p}(h))
    \end{equation}
    We then bound (for $z_0\geq5$)
    $$\prod_{z_0< p\leq z\atop p\mid \prod_{j=1}^4\alpha_j}(1-\beta_{X^{(1,p,1,1)},p}(h))\geq \prod_{z_0<p\leq z}(1-\frac{4}{p})\geq \frac{1}{\zeta(4)}\prod_{z_0<p\leq z}(1-\frac{1}{p})^4.$$
    Using [6, (3.30) and (3.26)], for $z_0$ sufficiently large we obtain from (6.8) that
    \begin{equation}
        \sum_{\ell_2\mid P_{h}(z_0,z)}\lambda_{\ell_2}^+\prod_{p\mid \ell_2}\beta_{X^{(1,\ell_1,1,1)},p}(h)\gg (\frac{\log(z_0)}{\log(z)})^4
    \end{equation}
    Taking $\kappa=4$ and 
    $$K:=\zeta(4)(1+\frac{1}{2\log(z)^2})^4(1+\frac{1}{\log(z_0)^2})^4$$
    We can conclude that for sufficiently large $z_0$
    \begin{equation}
        \prod_{z_0< p\leq z\atop p\mid \prod_{j=1}^4\alpha_j}(1-\beta_{X^{(1,p,1,1)},p}(h))\geq K^{-1}(\frac{\log(z_0)}{\log(z)})^4
    \end{equation}
    By choosing $z_0$ sufficiently large, we can take $K$ arbitrarily close to $\zeta(4)$ from above, say $K<1.645.$ By [3, Theorem 6.1], we hence obtain
    \begin{equation}
        \sum_{\ell_1\mid P_{h}(z_0,z)}\Lambda_{\ell_1}^-\prod_{p\mid \ell_1}\beta_{X^{(\ell_1,1,1,1)},p}(h)>(1-e^{37-s}(1.083^{10}))\prod_{z_0<p\leq z\atop p\mid\prod\alpha_j}\beta_{X^{(p,1,1,1)},p}(h).
    \end{equation}
    One verifies that for $s \geq 38$ we have
    $$1-e^{37-s}(1.083^{10})>0.$$
    Thus we have 
    \begin{multline*}
        \sum_{\ell_{1}\mid P_h(z_0, z)}\Lambda_{\ell_1}^-\sum_{\ell_{2}\mid P_h(z_0, z)}\lambda_{\ell_2}^+\sum_{\ell_{3}\mid P_h(z_0, z)}\lambda_{\ell_3}^+\sum_{\ell_{4}\mid P_h(z_0, z)}\lambda_{\ell_4}^+\prod_{p\mid\boldsymbol{\ell}}\beta_{X^{\boldsymbol{\ell}},p}(h)
        \\\geq (1.083)^{-4}(1-e^{37-s}(1.083^{10}))(\frac{\log(z_0)}{\log(z)})^{16}+O_{\epsilon}(z_0^{\epsilon-1}\log(z)^{16})
    \end{multline*}
    Taking $z_0=\log(z)^{33},$ we have
    \begin{equation}
      \sum_{\ell_{1}\mid P_h(z_0, z)}\Lambda_{\ell_1}^-\sum_{\ell_{2}\mid P_h(z_0, z)}\lambda_{\ell_2}^+\sum_{\ell_{3}\mid P_h(z_0, z)}\lambda_{\ell_3}^+\sum_{\ell_{4}\mid P_h(z_0, z)}\lambda_{\ell_4}^+\prod_{p\mid\boldsymbol{\ell}}\beta_{X^{\boldsymbol{\ell}},p}(h)
      \gg (\frac{\log(z_0)}{\log(z)})^{16}.
    \end{equation}
    Choose suitable $c_p,$ and again using [6, (3.30)], we conclude that
    \begin{equation}
        W_{\textbf{c},h}(z_0)\gg\frac{1}{\log(z_0)^3}.
    \end{equation}
    Thus the main term has a lower bound 
    \begin{equation}
        \gg \frac{\log(z_0)^{13}}{\log(z)^{16}}r_{gen^+(X^{\textbf{1}})}(h).
    \end{equation}
    Thus we conclude
    \begin{multline}
        S_{\textbf{c},h}(A_{\textbf{1}},z)\gg (\frac{\log(z_0)^{13}}{\log(z)^{16}}+O((\Delta^{-\frac{1}{2}}H(n)^5\log(D_0)^8
        \\+\Delta^{\epsilon-B}\log(z_0)^8+\Delta^Ce^{-s_0})(\frac{\log(z)}{\log(z_0)})^{6}))r_{gen^+(X^{\boldsymbol{\ell}})}(h)
        \\+O_{\epsilon}((DD_0)^{26+\epsilon}(\prod_{j=1}^4\alpha_j)^{\frac{11}{4}+\epsilon}h^{\frac{1}{2}+\epsilon}.
    \end{multline}
    Write $z=h^{\theta},s=38, D_0=h^{\epsilon},$ so $DD_0\ll_{\epsilon}h^{38\theta+\epsilon}$ and $s_0\gg_{\epsilon}\frac{\log(h)}{\log(\log(h))}.$ We can choose $\Delta=H(n)^{10}\log(h)^{\max(62,\frac{23}{B})}$.
    We then note that $\Delta\gg H(n)^{10}\log(h)^{62}$ together with $\log(D_0)\ll \log(h))$ implies that
    $$\Delta^{-\frac{1}{2}}H(n)^5\log(D_0)^{8}(\frac{\log(z)}{\log(z_0)})^{6}\ll_{\epsilon} \log(h)^{-17+\epsilon}\log(z_0)^{-6}$$
    while $\Delta\gg \log(h)^{\frac{23}{B}}$ implies
    $$\Delta^{\epsilon-B}\log(z_0)^8(\frac{\log(z)}{\log(z_0)})^{6}\ll \log(h)^{-17+\epsilon}\log(z_0)^{2}$$
    We can conclude that 
    \begin{equation}
        S_{\textbf{c},h}(A_{\textbf{1}},z)\gg \frac{\log(z_0)^{13}}{\log(z)^{16}}r_{gen^+(X^{\boldsymbol{\ell}})}(h)+O_{\epsilon}(h^{988\theta+\frac{1}{2}+\epsilon}(\prod_{j=1}^4\alpha_j)^{\frac{11}{4}+\epsilon})
    \end{equation}
    Then by Lemma 3.6 we have
    \begin{equation}
        \gg_{\epsilon}(\prod_{j=1}^4\alpha_j)^{-\frac{1}{2}}h^{1-\epsilon}+O(h^{988\theta+\frac{1}{2}+\epsilon}(\prod_{j=1}^4\alpha_j)^{\frac{11}{4}+\epsilon})
    \end{equation}
    Taking $$\theta<\frac{1}{1977}$$
    We have $$988\theta+\frac{1}{2}<1,$$
    and for sufficiently large $h$ we have
    $$S_{\textbf{c},h}(A_{\textbf{1}},z)>0$$
    Hence in (1.1) we may take that odd $p \mid x$ implies that
    $$p>h^{\theta}=(8(m-2)n+\sum_{j=1}^4\alpha_j(m-4)^2)^{\theta}\gg_{m}n^{\theta}=n^{\frac{1}{1977}}$$
    and $\text{ord}_p(x_j)<c_p$ for all $1\leq j\leq 4$
    If
    \begin{equation}
        \sum_{p>n^{\frac{1}{1900}}}\text{ord}_p(x)=a>0,
    \end{equation}
    then 
    $$p_m(x)\geq \frac{(m-2)x^2}{2}-\frac{\mid m-4\mid\cdot \mid x \mid}{2}\geq \frac{m-\frac{5}{2}}{2}x^2,$$
     Then, letting $\beta$ be the maximum of the respective choices of $a$ for $x_1,x_2,x_3,x_4$ from (6.18), we have
     $$n=\sum_{j=1}^4\alpha_jp_m(x_j)\gg_mn^{\frac{2\beta}{1977}},$$
     and we conclude that 
     $$\beta\leq 988.$$
      Overall, we obtain that $x_1,x_2,x_3,x_4$ are divisible by at most 
      $988$ primes.
      %$988+\sum_{p\mid \prod_{\alpha_j}}c_p$ primes.
    \end{proof}

    \begin{theorem}
        Let $h=8(m-2)n+\sum_{j=1}^4\alpha_j(m-4)^2$ and $\epsilon>0.$ Then $r_{X}(h)\neq 0$ if 
        \begin{equation*}
            h\gg_{\epsilon} (2(m-2))^{21+\epsilon}(\prod_{j=1}^4\alpha_j)^{6+\epsilon}.
        \end{equation*}
        \begin{proof}
            By Lemma 4.2, we have 
            \begin{align*}
                \mid a_{G_{X}}(h)\mid & \ll M^{\frac{11}{2}+\epsilon}N_{\boldsymbol{\alpha}}^{\frac{5}{2}+\epsilon}h^{\frac{1}{2}+\epsilon}
                \\& =(2(m-2))^{\frac{11}{2}}(4(m-2)^2\prod_{j=1}^4\alpha_j)^{\frac{5}{2}}h^{\frac{1}{2}+\epsilon}.  
            \end{align*}
            Then by comparing with the main term in (6.18), $r_{X}(h)\neq 0$ implies
            \begin{align*}
                (\prod_{j=1}^4\alpha_j)^{-\frac{1}{2}}h^{1-\epsilon}\gg_{\epsilon}(2(m-2))^{\frac{11}{2}}(4(m-2)^2\prod_{j=1}^4\alpha_j)^{\frac{5}{2}}h^{\frac{1}{2}+\epsilon}
            \end{align*}
            Therefore we have 
            $$h\gg_{\epsilon}(2(m-2))^{21+\epsilon}(\prod_{j=1}^4\alpha_j)^{6+\epsilon}.$$
        \end{proof}
    \end{theorem}

\def\refname{R\MakeLowercase{eferences}}

\end{document}